\newtheorem{thm}{Theorem}[section]
\newtheorem{Def}[thm]{Definition}
\newtheorem{ex}[thm]{Example}
\newtheorem{prop}[thm]{Proposition}
\newtheorem{cor}[thm]{Corollary}
\newtheorem{lem}[thm]{Lemma}
\newcommand{\Xk}{(X,\kappa)}
\newcommand{\Xp}{(X,p)}
\newcommand{\Yl}{(Y,\lambda)}
\newcommand{\kl}{(\kappa,\lambda)}
\newcommand{\Tn}{T:I^n\to X}
\newcommand{\piX}{{\rm{\Pi}}^\kappa_1(X,p)}
\newcommand{\clk}{Cl_\kappa}
\newcommand{\ink}{Int_\kappa}
\newcommand{\cat}[1]{\mathsf{#1}}
\begin{document}

\title{Digital Hurewicz Theorem and Digital Homology Theory %\thanks{Grants or other notes
%about the article that should go on the front page should be
%placed here. General acknowledgments should be placed at the end of the article.}
}
%\subtitle{Do you have a subtitle?\\ If so, write it here}

%\titlerunning{Digital Homology theory}        % if too long for running head

\author{Samira Sahar Jamil         \and
        Danish Ali %etc.
}

%\authorrunning{Short form of author list} % if too long for running head

\institute{S. S. Jamil \at
			Department of Mathematics, University of Karachi, Karachi, Pakistan\\
              Department of Mathematical Sciences, Institute of Business Administration (IBA), Karachi, Pakistan               
%              Tel.: +123-45-678910\\
%              Fax: +123-45-678910
\\
              \email{ssjamil@iba.edu.pk}           %  \\
%             \emph{Present address:} of F. Author  %  if needed
           \and
           D. Ali \at
           Department of Mathematical Sciences, Institute of Business Administration (IBA), Karachi, Pakistan\\
%           \email{dali@iba.edu.pk}
}

\date{}
% The correct dates will be entered by the editor

\maketitle
%\vspace{-80pt}
\begin{abstract}
%Digital images are discrete models of objects in the real world. Digital topology is one of fastest growing areas of research, where researchers are interested in exploring topological properties of digital images and knowing if these properties match with the topological properties of objects they model. Often, these studies are carried out by constructing digital analogues to the concepts of topology and algebraic topology. 
In this paper, we develop homology groups for digital images based on cubical singular homology theory for topological spaces. Using this homology, we present digital Hurewicz theorem for the fundamental group of digital images. We also show that the homology functors developed in this paper satisfy properties that resemble the Eilenberg-Steenrod axioms of homology theory, in particular, the homotopy and the excision axioms. We finally define axioms of digital homology theory. 
\keywords{digital topology \and digital homology theory \and digital Hurewicz theorem \and cubical singular homology for digital images\and digital excision}
% \PACS{PACS code1 \and PACS code2 \and more}
 \subclass{ 55N35\and 68U05\and 68R10\and 68U10 }
\end{abstract}
%\vspace{-20pt}
\section{Introduction}\label{Sec_Intro}
Digital images can be considered as objects in $\mathbb{Z}^d$, where $d$ is 2 for two dimensional images and $d$ is 3 for three-dimensional images. 
Though these objects are discrete in nature, they model continuous objects of the real world.  Researchers are trying to understand whether or not digital images show similar properties as their continuous counterparts. 
The main motivation behind such studies is to develop a theory for digital images that is similar to the theory of topological spaces in classical topology. 
Due to discrete nature of digital images, it is difficult to get results that are analogous to those in classical topology. 
Nevertheless, great effort has been made by researchers to derive a theory that goes in line with general topology. 
Several notions that are well-studied in general topology and algebraic topology, have been developed for digital images, which include
continuity of functions \cite{Rosen86,Boxer94},
%compactness, 
Jordan curve theorem \cite{Rosen79,Slapal04}, 
%Seifart Van Kampen theorem, 
covering spaces \cite{Han08}, 
fundamental group \cite{Kong_89,Boxer_99},
homotopy (see \cite{Boxer06}, \cite{Boxer10}), 
homology groups \cite{Arslan_08,Boxer_homo_2011,Lee_11,Karaca_2012,Ege_2013,Ege_2014},
cohomology groups \cite{Ege_2013}, H-spaces \cite{Ege16} and fibrations \cite{Ege17}.

The idea of fundamental group was first introduced in the field of digital topology by Kong \cite{Kong_89}. 
Boxer \cite{Boxer_99} adopted a classical approach to define and study fundamental group, which was closer to the methods of algebraic topology. 
Simplicial homology groups were introduced in the field of digital topology by Arslan \textit{et al.} \cite{Arslan_08} and extended by Boxer \textit{et al.}  \cite{Boxer_homo_2011}. 
Eilenberg-Steenrod axioms for simplicial homology groups of digital images were investigated by Ege and Karaca in \cite{Ege_2013}, where it is shown that all these axioms hold in digital simplicial setting except for homotopy and excision axioms. 
They demonstrate using an example that Hurewicz theorem does not hold in case of digital simplicial homology groups. Relative homology groups of digital images in simplicial setting were studied in \cite{Ege_2014}.
Karaca and Ege \cite{Karaca_2012} developed the digital cubical homology groups in a similar way as the cubical homology groups of topological spaces in algebraic topology. 
Unlike the case of algebraic topology,  digital cubical homology groups are in general not isomorphic  to  digital simplicial homology groups studied in \cite{Boxer_homo_2011}. 
Furthermore, Mayer-Vietoris theorem fails for cubical homology on digital images, which is another contrast to the case of algebraic topology. 
In \cite{Lee_11}, singular homology group of digital images were developed.
 
We do not know of any homology theory on digital images that relates to the fundamental group of digital images developed in \cite{Boxer_99}, as in algebraic topology. 
This is the main motivation behind work done in this paper. 
This paper is organized as follows. We review some of the basic concepts of digital topology in Section \ref{Sec_Prelim}.
%, where most of the definitions are taken from  Boxer and Staecker \cite{Boxer06}
We develop homology groups of digital images based on cubical singular homology of topological spaces as given in \cite{Massey91} in Section \ref{Sec_CubSing}, and give some basic results including the functoriality, additivity and homotopy invariance of cubical singular homology groups. 
In Section \ref{Sec_Hure}, we show that the fundamental group for digital images (given by \cite{Boxer_99}) is related to our first homology group, and obtain a result that is analogous to Hurewicz theorem of algebraic topology.
In Section \ref{Sec_RelExc}, we prove a result for cubical singular homology on digital  images (Theorem \ref{Thm_Excision_nleq2}) similar to the excision theorem of algebraic topology except that our result holds only in dimensions less than 3.  
This result is then generalized and we call this generalization `Excision-like property'  for cubical singular homology on digital images (Theorem \ref{Thm_Excision_like}). 
Cubical singular homology groups satisfy properties that are much similar to the Eilenberg-Steenrod axioms of homology theory.
%, in particular, we show in Section \ref{Sec_RelExc} that these functors exhibit an excision-like property, which can be considered as excision for digital images or digital excision. 
We define digital homology theory in Section \ref{Sec_DigHom}, the axioms of which can be regarded as digital version of Eilenberg-Steenrod axioms in algebraic topology. We also show that cubical singular homology is a digital homology theory. Throughout this paper, we consider finite binary digital images, though most of the results also hold for infinite case. 

\section{Preliminaries}\label{Sec_Prelim}
\subsection{Basic concepts of digital topology}
Let $\mathbb{Z}^d$ be the Cartesian product of $d$ copies of set of integers $\mathbb{Z}$, for a positive integer $d$. 
A digital image is a subset of $\mathbb{Z}^d$. 
%Different adjacency relations have been defined in literature for digital images. 
A relation that is symmetric3.3 and irreflexive is called an \textit{adjacency relation}. In digital images, adjacency relations give a concept of proximity or closeness among its elements, which allows some constructions in digital images that closely resemble those in topology and algebraic topology. The adjacency relations on digital images used in this paper are defined below.
\begin{Def}{\rm{\cite{Boxer06}}}
Consider  a positive integer $l$, where $1 \leq l \leq d$. The points $p,q \in \mathbb{Z}^d$ are said to be $c_l$\textit{-adjacent} if they are different and there are at most $l$ coordinates of $p$ and $q$ that differ by one unit, while the rest of the coordinates are equal. 
\end{Def}
Usually the notation $c_l$ is replaced by number of points $\kappa$ that are $c_l$-adjacent to a point. For $\mathbb{Z}^2$, 
there are $4$ points that are $c_1$-adjacent to a point and there are 8 points that are $c_2$-adjacent to a point, thus $c_1=4$ and $c_2=8$. 
Two points that are $\kappa$-adjacent to each other, are said to be $\kappa$-neighbors of each other. For $a,b \in \mathbb{Z}$, $a<b$, a \textit{digital interval} denoted as $[a,b]_\mathbb{Z}$ is a set of integers from $a$ to $b$, including $a$ and $b$. 
The digital image $X\subseteq\mathbb{Z}^d$ equipped with adjacency relation $\kappa$ is represented by the ordered pair $(X,\kappa)$.
\begin{Def}{\rm{\cite{Kong_89}\cite{Boxer_99}}}
Let $\Xk$ and $\Yl$ be digital images.\begin{enumerate}[label={\rm{(\roman*)}}]
\item  The function $f:X\to Y$ is \textit{$\kl$-continuous} if for every pair of $\kappa$-adjacent points $x_0$ and $x_1$ in $X$, either the images $f(x_0)$ and $f(x_1)$ are equal or $\lambda$-adjacent. 
\item Digital image $\Xk$ is said to be \textit{$\kl$-homeomorphic} to $\Yl$ , if there is a $\kl$-continuous bijection $f:X\to Y$, which has a $(\lambda,\kappa)$-continuous inverse $f^{-1}:Y\to X$.
\item A \textit{$\kappa$-path} in $\Xk$ is a $(2,\kappa)$-continuous function $f:[0,m]_\mathbb{Z}\to X$. We say $f$ is $\kappa$-path of length $m$ from $f(0)$ to $f(m)$. 
For a given $\kappa$-path $f$ of length $m$, we define \textit{reverse $\kappa$-path} $\overline{f}:[0,m]_\mathbb{Z}\to X$ defined by $\overline{f}(t)=f(m-t)$. 
A \textit{$\kappa$-loop} is a $\kappa$-path $f:[0,m]_\mathbb{Z}\to X$, with $f(0)=f(m)$.
\item
A subset $A \subset X$ is \textit{$\kappa$-connected} if and only if for all $x,y \in A$, $x \neq y$, 
there is a $\kappa$-path from $x$ to $y$. 
A \textit{$\kappa$-component} of a digital image is the maximal $\kappa$-connected subset of the digital image. 
\end{enumerate}
\end{Def}
\begin{Def}
Consider digital images $(X,\kappa)$ and $(Y,\kappa)$ with $X,Y\subset\mathbb{Z}^d$. 
\begin{itemize}[label={\textbullet}]
\item We say that $\Xk$ is \textit{$\kappa$-connected} with $(Y,\kappa)$ if there is $x\in X$ and $y\in Y$ such that $x$ and $y$ are $\kappa$-adjacent in $\mathbb{Z}^d$.
\item If $(X,\kappa)$ is not $\kappa$-connected with $(Y,\kappa)$, we say that $(X,\kappa)$ is $\kappa$-disconnected with $(Y,\kappa)$.
\end{itemize}
\end{Def}
\begin{prop}{\rm{\cite{Boxer_99}}}
If  $f:X\to Y$  is a $\kl$-continuous function, with $A\subset X$ a $\kappa$-connected subset, then $f(A)$ is $\lambda$-connected in $Y$.
\end{prop}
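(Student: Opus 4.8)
The plan is to mimic the classical argument that a continuous image of a connected set is connected, with ``connected'' replaced by ``joined by a digital path'' throughout, since that is how $\kappa$-connectedness is phrased in the definitions above. First I would fix two arbitrary distinct points $y_0,y_1\in f(A)$ and choose preimages $x_0,x_1\in A$ with $f(x_0)=y_0$ and $f(x_1)=y_1$; since $y_0\neq y_1$ these preimages are necessarily distinct. As $A$ is $\kappa$-connected, there is a $\kappa$-path $g:[0,m]_\mathbb{Z}\to X$ lying in $A$ with $g(0)=x_0$ and $g(m)=x_1$, that is, a $(2,\kappa)$-continuous function.

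The key step is to show that the composite $f\circ g:[0,m]_\mathbb{Z}\to Y$ is a $\lambda$-path, i.e.\ that it is $(2,\lambda)$-continuous; this is essentially a small lemma on composition of digitally continuous maps. I would verify it pointwise: take consecutive (hence $2$-adjacent) integers $t,t+1\in[0,m]_\mathbb{Z}$. By $(2,\kappa)$-continuity of $g$, the values $g(t)$ and $g(t+1)$ are either equal or $\kappa$-adjacent. In the first case $f(g(t))=f(g(t+1))$; in the second case $\kl$-continuity of $f$ forces $f(g(t))$ and $f(g(t+1))$ to be equal or $\lambda$-adjacent. Either way the two images are equal or $\lambda$-adjacent, which is precisely what $(2,\lambda)$-continuity demands.

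Having established this, $f\circ g$ is a $\lambda$-path from $f(g(0))=y_0$ to $f(g(m))=y_1$ whose image lies in $f(A)$, so $y_0$ and $y_1$ are joined by a $\lambda$-path inside $f(A)$. Since $y_0,y_1$ were arbitrary distinct points of $f(A)$, this shows $f(A)$ is $\lambda$-connected. I do not anticipate a genuine obstacle: the only subtlety is that a digital path is allowed to repeat values, so the collapsing of $\kappa$-adjacent points to a single point under $f$ does not break the path. This is exactly why the continuity definition permits equality as well as adjacency, and it is the single place where a careless argument could go wrong.
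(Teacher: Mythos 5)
Your argument is correct and is the standard one: the paper itself states this proposition without proof (it is cited from Boxer's 1999 paper), and your composition argument --- that a $(2,\kappa)$-continuous path followed by a $(\kappa,\lambda)$-continuous map is $(2,\lambda)$-continuous, hence the image of any connecting $\kappa$-path in $A$ is a connecting $\lambda$-path in $f(A)$ --- is exactly the expected proof. Your remark about why equality must be allowed alongside adjacency in the continuity definition correctly identifies the one place a careless version would fail.
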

\begin{Def}{\rm\cite{Boxer_99}}
~\begin{enumerate}[label={\rm{(\roman*)}}]
\item Let $f,g:X\to Y$ be $\kl$-continuous functions. Suppose there is a positive integer $m$ and a function $H:[0,m]_\mathbb{Z}\times X\to Y$ such that:
\begin{itemize}[label=\small{\textbullet}]
\item for all $x\in X$, $H(0,x)=f(x)$ and $H(m,x)=g(x)$,
\item for all $x\in X$, the function $H_x:[0,m]_\mathbb{Z}\to Y$ defined by $H_x(t)=H(t,x)$ for all $t\in [0,m]_\mathbb{Z}$ is $(2,\lambda)$-continuous,
\item for all $t\in [0,m]_\mathbb{Z}$, the function $H_t:X\to Y$ defined by $H_t(x)=H(t,x)$ for all $x\in X$ is $\kl$-continuous. 
\end{itemize}
Then $H$ is called \textit{$\kl$-homotopy} from $f$ to $g$ and $f$ and $g$ are said to be \textit{$\kl$-homotopic}, denoted as $f\simeq_{\kl} g$. If $g $ is a constant function, $H$ is a \textit{null-homotopy} and $f$ is \textit{null-homotopic}. 
\item Two digital images $\Xk$ and $\Yl$ are \textit{homotopically equivalent}, if there is a $\kl$-continuous function $f:X\to Y$ and $(\lambda,\kappa)$-continuous function $g:Y\to X$ such that $g\circ f\simeq_{\kl} 1_X$ and $f\circ g\simeq_{(\lambda,\kappa)} 1_Y$, where $1_X$ and $1_Y$ are identity functions on $X$ and $Y$, respectively.
\item Let $H:[0,m]_\mathbb{Z}\times [0,n]_\mathbb{Z}\to X$ be a homotopy between $\kappa$-paths $f,g:[0,n]_\mathbb{Z}\to X$ in $\Xk$. The homotopy $H$ is said to\textit{ hold the end-points fixed} if $f(0)=H(t,0)=g(0)$ and $f(n)=H(t,n)=g(n)$ for all $t\in [0,m]_\mathbb{Z}$.
\end{enumerate}
 
\end{Def} 

\subsection{Digital Fundamental group}\label{Sec_Fund}
The concept of fundamental group for digital images was first given by \cite{Kong_89}, but a more classical approach to define and study digital fundamental group was adopted by Boxer \cite{Boxer_99}. We briefly explain digital fundamental group as defined in the latter paper.
\begin{Def}{\rm\cite{Boxer_99}}
\begin{enumerate}[label={\rm{(\roman*)}}]
\item A pointed digital image is a pair $(X,p)$, where $X$ is a digital image and $p\in X$. A pointed digital image $\Xp$ can be represented as $(\Xp,\kappa)$, if one wishes to emphasize the adjacency relation of the digital image $X$.  
\item Let $f$ and $g$ be $\kappa$-paths of lengths $m_1$ and $m_2$, respectively, in the pointed digital image $\Xp$, such that $g$ starts where $f$ ends, \textit{i.e.} $f(m_1)=g(0)$. The `product'  $f\ast g$ of two paths  is defined as follows:
\[ (f\ast g)(t)=
\begin{cases} 	f(t), &\text{if }t\in[0,m_1]_\mathbb{Z} \\
				g(t-m_1), &\text{if }t\in[m_1,m_1+m_2]_\mathbb{Z}. \end{cases}
\]
\end{enumerate}
\end{Def}
The concept of \textit{trivial extension} allows stretching the domain of a loop, without changing its homotopy class and thus allows to compare homotopy properties of paths even when the cardinalities of their domain differ.
\begin{Def}{\rm\cite{Boxer_99}}
\begin{enumerate}[label={\rm{(\roman*)}}]
\item Let $f$ and $f'$ be $\kappa$-paths in a pointed digital image $\Xp$.
We say that  $f'$ is a trivial extension of $f$, if there exist sets of $\kappa$-paths $\{f_1,f_2,\ldots,f_k\}$ and $\{f'_1,f'_2,\ldots,f'_n\}$ in $X$ such that 
\begin{itemize}[label=\textbullet]
\item $0<k\leq n$
\item $f=f_1\ast f_2\ast\cdots\ast f_k$
\item $f'=f'_1\ast f'_2\ast\cdots\ast f'_n$
\item there are indices $1\leq i_1<i_2<\cdots <i_k\leq n$ such that:
\begin{itemize}[label=\scriptsize\textbullet]
\item $f'_{i_j}=f_j, 1\leq j\leq k$ and
\item $i\notin \{i_1,i_2,\ldots,i_k\}$ implies $f'_i$ is a constant $\kappa$-path.
\end{itemize}
\end{itemize}
\item Two $\kappa$-loops $f$ and $g$ with the same basepoint $p\in X$ \textit{belong to the same loop class}, if there exist trivial extensions of $f$ and $g$, which have homotopy between them that holds the end-points fixed. 
\end{enumerate}\end{Def}
\begin{Def}{\rm\cite{Boxer_99}} Let $\piX$ be the set of loop classes in $\Xp$ with basepoint $p$. Let $[ f]_{_{\rm\Pi}}$ denote the loop class of $\kappa$-loop $f$ in $\Xk$. The product operation $\ast$ defined as:
\[[ f]_{_{\rm\Pi}}\ast [ g]_{_{\rm\Pi}}=[ f\ast g]_{_{\rm\Pi}}\]
is well defined on $\piX$ as well as associative \cite{Boxer_99}. The loop class $[ c]_{_{\rm\Pi}}$ of the constant loop is identity in $\piX$ with respect to taking product. For every loop class $[ f]_{_{\rm\Pi}}$ the loop class $[ \overline{f}]_{_{\rm\Pi}}$, where $\overline{f}$ is the reverse path of $f$, is the inverse of $[ f]_{_{\rm\Pi}}$ with respect to taking product $\ast$. Thus $\piX$ is a group under $\ast$  and called the \textit{digital fundamental group} of the pointed digital image $\Xp$. 
\end{Def}

\section{Cubical Singular Homology on Digital images}\label{Sec_CubSing}
%In this section we define cubical singular homology for digital images as a functor from the category of digital images and ($\kappa$,$\lambda$)-continuous functions to the category of Abelian groups and homomorphisms. 
%%%%%%%%%%%%%%%Take to intro
%Some details in the proofs of this section are skipped because of their similarity with the results in \cite{Massey91}.   \\
Consider digital interval $I=[0,1]_\mathbb{Z}$. Let $I^n$ be the Cartesian product of $n$ copies of $I$ for $n>0$. 
We shall consider $I^n$ as a digital image $(I^n,2n)$. By definition, $I^0$ is a digital image consisting of single point.
For an integer $n\geq0$, a \textit{digitally singular $n$-cube} or briefly a digital \textit{$n$-cube} in  $(X,\kappa)$ is a $(2n,\kappa)$-continuous map $T:I^n\to X$.
%Note that a $1$-cube is a $\kappa$-path of length $1$. For a $\kappa$-path $f$ of length $m$,
% we can associate a sequence $\{f_j\}_{j=1}^m$ of $1$-cubes to $f$ where $f_j:I\to X$ is the map $t\mapsto f(j+t-1)$.
\\
For an integer $n\geq 0$, let $dQ_{n,\kappa}(X)$ denote the free Abelian group generated by the set of all digitally singular $n$-cubes in $(X,\kappa)$. 
We write $dQ_n(X)$ for $dQ_{n,\kappa}(X)$, when the adjacency relation is clear from the context. 
An element of $dQ_n(X)$ is a finite formal linear combination of digital $n$-cubes. 
The basis of the group $dQ_0(X)$ can be identified with $X$ itself, and one can denote the elements of $dQ_0(X)$ as $\sum_i m_ix_i$, where $x_i\in X$.
A digitally singular $n$-cube $T:I^n\to X$ is \textit{degenerate}
 if there is an integer $i$, $1\leq i\leq n$ such that $T(t_1,t_2,\ldots, t_n)$ does not depend on $t_i$. 
Let $dD_{n,\kappa}(X)$, or simply $dD_n(X)$, denote the subgroup of $dQ_n(X)$ generated by the set of all degenerate digitally singular $n$-cubes in $(X,\kappa)$. 
Let $dC_{n,\kappa}(X)$, or simply $dC_n(X)$, denote the quotient group $dQ_n(X)/dD_n(X)$. We say $dC_n(X)$ is  the group of \textit{digitally cubical singular $n$-chains} in $(X,\kappa)$ and the elements of $dC_n(X)$ are \textit{$n$-chains} in $\Xk$. For any digital image $X$, $dC_n(X)$ can be shown as free Abelian group generated by non-degenerate digital $n$-cubes in $X$.\\
We define faces of a digitally singular $n$-cube as follows:
For a digital  $n$-cube $T:I^n\to X$ and $i=1,2,\ldots,n$, we define digital $(n-1)$-cubes $A_iT,B_iT:I^{n-1}\to X$ as 
\begin{align*}
&A_iT(t_1,t_2,\ldots,t_{n-1})= T(t_1,t_2,\ldots,t_{i-1},0,t_i,\ldots,t_{n-1}), \\
 \text{and }~~~& B_iT(t_1,t_2,\ldots,t_{n-1})= T(t_1,t_2,\ldots,t_{i-1},1,t_i,\ldots,t_{n-1}).
\end{align*} 
$A_iT$ and $B_iT$ are called \textit{front $i$-face} and \textit{back $i$-face} of $T$, respectively.\\
We define the \textit{boundary operator} $\partial_n$ on the basis element of $dQ_n(X)$ as 
$\partial_n(T)=\sum_{i=1}^n (-1)^i (A_iT-B_iT)$ and extend it by linearity (see \cite{AT_Rotman}, 
for the definition of extension by linearity) to get the homomorphism $\partial_n:dQ_n(X)\to dQ_{n-1}(X)$, $n\geq 1$. 
One may write $\partial$ for $\partial_n$ if $n$ is clear from the context. 
For $n<0$, let $dQ_n(X)=dC_n(X)=0$ and for $n \leq 0$, let $\partial_n=0$.
It can be shown that $\partial_{n-1}\partial_n=0$, for all integers $n$ (see \cite{Massey91} for details). 
A \textit{cubical singular complex} of the digital image $\Xk$, denoted as $(C_{\bullet,\kappa}(X),\partial)$ 
or $(dC_\bullet(X),\partial)$,  is the following chain complex:
\begin{equation*}
\xymatrix{
\cdots \ar[r]^{\partial_{n+1}~~~~}  
& dC_n(X) \ar[r]^{\partial_{n}~~}
& dC_{n-1}(X) \ar[r]^{~~~\partial_{n-1}}
& \cdots 
}
\end{equation*}
Let $dZ_n(X)$ denote the kernel of $\partial_n$ and $dB_n(X)$ denote the image of $\partial_{n+1}$, for all integers $n$. 
%Both $dZ_n(X)$ and $dB_n(X)$ are subgroups of $dC_n(X)$. 
%and are called \textit{group of cubical singular $n$-cycles} and \textit{group of cubical singular $n$-boundaries}, respectively, 
The elements of $dZ_n(X)$ and $dB_n(X)$ are called \textit{$n$-cycles} and \textit{$n$-boundaries} of $\Xk$, respectively.
We define \textit{$n^{th}$ cubical singular homology group} of the digital image $(X,\kappa)$, 
as $dH_{n,\kappa}(X)=H_n(dC_\bullet,\partial)=dZ_n(X)/dB_n(X)$, for all non-negative integers $n$. 
If the adjacency relation $\kappa$ is clear from context, we shall simply write $dH_n(X)$ for $dH_{n,\kappa}(X)$.
\paragraph{\textbf{$\boldsymbol{\kappa}$-path and digital $\bf{1}$-cubes:} }
A digital $1$-cube $T:I\to X$ in a digital image $\Xk$ can be considered as a $\kappa$-path of length 1.
A $\kappa$-path $f$ of length $m$ can be ``subdivided" into smaller paths of length 1 or  digital $1$-cubes. 
For a $\kappa$-path $f$ of length $m$, we can associate an element $\sum_{j=1}^m f_j$ of $dQ_1(X)$ to $f$, where $f_j:I\to X$ as $f_j(t)=f(j+t-1)$. We say that the element $\sum_{j=1}^m f_j$ is \textit{subdivision} of $f$.
Following are some properties of
 subdivision  $\sum_{j=1}^m f_j$ of $f$:
\begin{enumerate}
\item $f_j$ are degenerate, whenever $f(j-1)=f(j)$
\item If $f$ is a non-constant path then $\sum_{j=1}^mf_j$ is not degenerate, and so $\sum_{j=1}^mf_j$ is a nontrivial element in $dC_1(X)$, where some $f_j$ might be 0 in $dC_1(X)$.
\item $\partial\left(\sum_{j=1}^mf_j\right)=f(m)-f(0)$.
\item If $f$ is a $\kappa$-loop then $\sum_{j=1}^mf_j$ is a $1$-cycle.
\end{enumerate}

\begin{prop}\label{Prop_kConnectedZ}
If $(X,\kappa)$ be a non-empty $\kappa$-connected digital image, then \mbox{$dH_0(X)\approx \mathbb{Z}$}.
\end{prop}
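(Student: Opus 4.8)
The plan is to compute $dH_0(X)$ explicitly and then identify it with $\ZZ$ via an augmentation map. First I would unwind the definitions in degree $0$. Since a $0$-cube cannot be degenerate (there is no index $i$ with $1\leq i\leq 0$), we have $dD_0(X)=0$, so $dC_0(X)=dQ_0(X)$ is the free Abelian group on the points of $X$. Because $\partial_0=0$ by convention, $dZ_0(X)=\ker\partial_0=dC_0(X)$, and therefore $dH_0(X)=dC_0(X)/dB_0(X)$ with $dB_0(X)=\operatorname{im}\partial_1$. The whole problem thus reduces to understanding $\operatorname{im}\partial_1$ inside the free group on points.

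Next I would introduce the augmentation homomorphism $\epsilon:dC_0(X)\to\ZZ$, defined on basis elements by $\epsilon(x)=1$ for each $x\in X$ and extended by linearity, so that $\epsilon\left(\sum_i m_i x_i\right)=\sum_i m_i$. Since $X$ is non-empty, $\epsilon$ is surjective. The strategy is then to show $\ker\epsilon=dB_0(X)$; granting this, the first isomorphism theorem yields $dH_0(X)=dC_0(X)/\ker\epsilon\approx\ZZ$, which is exactly the claim.

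The inclusion $dB_0(X)\subseteq\ker\epsilon$ is immediate. A digital $1$-cube $T:I\to X$ is a $\kappa$-path of length $1$, and from $\partial_1(T)=(-1)^1(A_1T-B_1T)=T(1)-T(0)$ we get $\epsilon(\partial_1 T)=1-1=0$; by linearity $\epsilon\circ\partial_1=0$, so every boundary lies in $\ker\epsilon$.

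The reverse inclusion $\ker\epsilon\subseteq dB_0(X)$ is the main step, and the only place where the hypothesis of $\kappa$-connectedness enters. Given $\sum_i m_i x_i\in\ker\epsilon$, so that $\sum_i m_i=0$, I would fix a basepoint $p\in X$ and use $\sum_i m_i=0$ to rewrite $\sum_i m_i x_i=\sum_i m_i(x_i-p)$. It therefore suffices to show each $x_i-p$ lies in $\operatorname{im}\partial_1$. By $\kappa$-connectedness there is a $\kappa$-path $f$ of some length $m$ from $p$ to $x_i$; passing to its subdivision $\sum_{j=1}^{m} f_j\in dC_1(X)$ and applying property (3) of subdivisions gives $\partial\!\left(\sum_{j=1}^{m} f_j\right)=f(m)-f(0)=x_i-p$, so $x_i-p\in\operatorname{im}\partial_1=dB_0(X)$. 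This closes the argument. I expect the only mild obstacle to be bookkeeping: confirming that the subdivision of a path of arbitrary length realizes the difference of its endpoints as a genuine boundary in the quotient complex $dC_\bullet(X)$ (degenerate terms $f_j$ vanishing in $dC_1(X)$ cause no trouble, since they contribute zero boundary), which is precisely what property (3) guarantees.
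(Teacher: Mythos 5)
Your proposal is correct and follows essentially the same route as the paper: the augmentation map $\varepsilon(\sum_i m_i x_i)=\sum_i m_i$, the inclusion $dB_0(X)\subseteq\ker\varepsilon$ via $\partial_1 T = T(1)-T(0)$, and the reverse inclusion by subdividing $\kappa$-paths from a fixed basepoint to realize $x_i-p$ as a boundary, followed by the first isomorphism theorem. The only cosmetic difference is that you rewrite $\sum_i m_i x_i$ as $\sum_i m_i(x_i-p)$ before exhibiting the boundary, whereas the paper absorbs the $(\sum_i m_i)\,p$ term at the end of the same computation.
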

\begin{proof}
Consider the map $\varepsilon:dC_0(X)\to\mathbb{Z}$ defined as $\sum_i m_i x_i\mapsto \sum_i m_i$. Now for $\sum_i n_i T_i\in dC_1(X)$, we have $\varepsilon\circ\partial(\sum_i n_i T_i)=\varepsilon(\sum_i n_i(B_1T-A_1T))=\sum_i(n_i-n_i)=0$. 
Thus $dB_0(X)\subset ker(\varepsilon)$. The reverse relation also holds for the following reason. Consider $\sum_im_ix_i\in ker(\varepsilon)$. We have $\sum_im_i=0$. 
Consider $x\in X$ ($X$ is non-empty) and $\kappa$-paths $f_i$ ($X$ is $\kappa$-connected) from $x$ to $x_i$. These paths can be subdivided to form elements $\sum_j f_{ij}\in dC_1(X)$ for each $i$. 
It can be verified that $\partial(\sum_jf_{ij})=x_i-x$. Thus $\partial(\sum_{i,j}m_if_{ij})=\sum_i m_ix_i-(\sum_i m_i)x=\sum_i m_ix_i$, implying $\sum_i m_i x_i\in dB_0(X)$. 
From first isomorphism theorem of groups $dH_0(X)=dZ_0(X)/dB_0(X)=dC_0(X)/dB_0(X)\approx\mathbb{Z}$.
\qed
\end{proof}

\begin{prop}\label{Prop_DirectSum}
Let $\{X_\alpha\vert\alpha\in\Lambda\}$ be the set of $\kappa$-components of the digital image $(X,\kappa)$. Then $dH_n(X)\approx\bigoplus_\alpha dH_n(X_\alpha)$.
\end{prop}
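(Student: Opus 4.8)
The plan is to show that the entire chain complex $(dC_\bullet(X),\partial)$ splits as a direct sum of the subcomplexes coming from the individual $\kappa$-components, and then to invoke the fact that homology commutes with direct sums. The crucial geometric input is that every digital $n$-cube has its image contained in a single $\kappa$-component.

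First I would observe that $I^n=([0,1]_{\mathbb{Z}})^n$, viewed as $(I^n,2n)$, is $2n$-connected, since any two of its vertices can be joined by a $2n$-path running along edges of the cube. Given a digital $n$-cube $T:I^n\to X$, which is $(2n,\kappa)$-continuous by definition, the preceding proposition (continuous images of connected sets are connected) guarantees that $T(I^n)$ is a $\kappa$-connected subset of $X$, and therefore lies entirely within a single component $X_\alpha$. Consequently each generating $n$-cube of $dQ_n(X)$ belongs to exactly one $dQ_{n,\kappa}(X_\alpha)$, and since $dQ_n(X)$ is free abelian on these generators, $dQ_n(X)=\bigoplus_\alpha dQ_n(X_\alpha)$. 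The same reasoning applies to the degenerate cubes, giving $dD_n(X)=\bigoplus_\alpha dD_n(X_\alpha)$; passing to the quotient then yields the chain-level splitting $dC_n(X)=\bigoplus_\alpha dC_n(X_\alpha)$.

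Next I would check that this splitting is compatible with the boundary operator. For a cube $T$ with image in $X_\alpha$, each face $A_iT$ and $B_iT$ is obtained by restricting $T$ and hence also has image in $X_\alpha$; thus $\partial$ carries $dC_n(X_\alpha)$ into $dC_{n-1}(X_\alpha)$ and sends the $\alpha$-summand to itself. Therefore $(dC_\bullet(X),\partial)=\bigoplus_\alpha (dC_\bullet(X_\alpha),\partial)$ as chain complexes, and because kernels and images of a direct sum of maps are the corresponding direct sums, we get $dZ_n(X)=\bigoplus_\alpha dZ_n(X_\alpha)$ and $dB_n(X)=\bigoplus_\alpha dB_n(X_\alpha)$.

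Finally, since the quotient of a direct sum of kernels by the corresponding direct sum of images is the direct sum of the quotients, I obtain
\[
dH_n(X)=dZ_n(X)/dB_n(X)\approx\bigoplus_\alpha dZ_n(X_\alpha)/dB_n(X_\alpha)=\bigoplus_\alpha dH_n(X_\alpha).
\]
I expect the only real subtlety to be the first step: establishing cleanly that each singular cube lands in a single component, so that the generating sets of $dQ_n(X)$ and $dD_n(X)$ partition according to $\Lambda$. Once that is in hand, everything else is formal bookkeeping about direct sums of free abelian groups and the exactness of the direct-sum construction.
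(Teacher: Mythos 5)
Your proposal is correct and follows essentially the same route as the paper's proof: both rest on the observation that each digital $n$-cube has $\kappa$-connected image (since $I^n$ is $2n$-connected and continuous maps preserve connectedness) and hence lands in a single component, after which the chain complex, its cycles, boundaries, and homology all split as direct sums. Your write-up merely makes explicit the bookkeeping that the paper compresses into two sentences.
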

\begin{proof}
The groups $dQ_n(X)$, $dD_n(X)$ and $dC_n(X)$ break up to $\bigoplus_\alpha dQ_n(X_\alpha)$,
$\bigoplus_\alpha dD_n(X_\alpha)$ and $\bigoplus_\alpha dC_n(X_\alpha)$, respectively, because, the image of each digital $n$-cube $T$ lies entirely in one $\kappa$-component of $\Xk$ (see Section \ref{Sec_Prelim}). 
We also have  $dZ_n(X)=\bigoplus_\alpha dZ_n(X_\alpha)$ and $dB_n(X)=\bigoplus_\alpha dB_n(X_\alpha)$, and hence $dH_n(X)=\bigoplus_\alpha dH_n(X_\alpha)$, because the boundary map $\partial_n:dC_n(X)\to dC_{n-1}(X)$ maps $dC_n(X_\alpha)$ to $dC_{n-1}(X_\alpha)$.
\qed
\end{proof}
\begin{prop}
For any digital image $(X,\kappa)$, $dH_0(X)$ is a free Abelian group with rank equal to the number of $\kappa$-components of $(X,\kappa)$.
\end{prop}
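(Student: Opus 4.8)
The plan is simply to assemble the two propositions that immediately precede this statement; the result is a direct corollary of them. First I would invoke Proposition \ref{Prop_DirectSum} in the special case $n=0$, which furnishes the isomorphism
\[
dH_0(X)\approx\bigoplus_\alpha dH_0(X_\alpha),
\]
where $\{X_\alpha\mid\alpha\in\Lambda\}$ is the set of $\kappa$-components of $(X,\kappa)$. This reduces the computation of $dH_0(X)$ to understanding $dH_0$ on each component separately.

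Next I would observe that, by definition, a $\kappa$-component is a \emph{maximal} $\kappa$-connected subset of $X$, so each $X_\alpha$ is both non-empty and $\kappa$-connected. These are precisely the hypotheses of Proposition \ref{Prop_kConnectedZ}, so I may apply it to every summand to conclude $dH_0(X_\alpha)\approx\mathbb{Z}$ for each $\alpha\in\Lambda$. Substituting this into the direct-sum decomposition above gives
\[
dH_0(X)\approx\bigoplus_{\alpha\in\Lambda}\mathbb{Z},
\]
a direct sum of copies of $\mathbb{Z}$ indexed by $\Lambda$.

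Finally I would recall that a direct sum of copies of $\mathbb{Z}$ is free Abelian, with a basis given by the canonical generators of the summands; its rank is therefore the cardinality $|\Lambda|$ of the index set, which is exactly the number of $\kappa$-components of $(X,\kappa)$. This establishes the claim.

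I do not anticipate any genuine obstacle here, since all the substantive work has been carried out in the two earlier propositions. The only points requiring a word of justification are the two routine observations that each $\kappa$-component is non-empty and $\kappa$-connected (so that Proposition \ref{Prop_kConnectedZ} is applicable) and that an arbitrary direct sum of copies of $\mathbb{Z}$ is free Abelian of rank equal to the number of summands; neither merits more than a sentence.
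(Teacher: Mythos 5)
Your proposal is correct and matches the paper exactly: the paper's proof is the one-line remark that the result follows from Propositions \ref{Prop_kConnectedZ} and \ref{Prop_DirectSum}, and you have simply spelled out that combination. Nothing further is needed.
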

\begin{proof}
Follows from Propositions \ref{Prop_kConnectedZ} and \ref{Prop_DirectSum}.
\qed
\end{proof}

\begin{prop}\label{prop_funcHn}
The cubical singular homology group $dH_n(-)$ is a functor from $\cat{Dig}$ to $\cat{Ab}$.
\end{prop}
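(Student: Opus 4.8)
The plan is to realize $dH_n$ as the usual composite: an induced chain map on cubical chains, followed by the induced map on homology, followed by verification of the two functoriality axioms. The action on objects is already fixed by sending $\Xk$ to the abelian group $dH_n(X)$, so the content lies in defining $dH_n$ on morphisms. Given a $\kl$-continuous map $f:X\to Y$, I would first define $f_\#:dQ_n(X)\to dQ_n(Y)$ on a basis element $T:I^n\to X$ by $f_\#(T)=f\circ T$, and extend by linearity. The point to verify here is that $f\circ T$ is again a digitally singular $n$-cube in $\Yl$, i.e.\ that it is $(2n,\lambda)$-continuous; this is immediate from the fact that a composite of digitally continuous maps is digitally continuous, since $T$ is $(2n,\kappa)$-continuous and $f$ is $\kl$-continuous.

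Next I would check that $f_\#$ descends to the quotient $dC_n=dQ_n/dD_n$. If $T$ is degenerate, so that $T(t_1,\ldots,t_n)$ is independent of some coordinate $t_i$, then $f\circ T$ is visibly independent of $t_i$ as well, hence degenerate; thus $f_\#$ carries $dD_n(X)$ into $dD_n(Y)$ and so induces a homomorphism $dC_n(X)\to dC_n(Y)$, still written $f_\#$. The crucial step is then to show $f_\#$ is a chain map. This rests on the identities $A_i(f\circ T)=f\circ A_iT$ and $B_i(f\circ T)=f\circ B_iT$, which follow by unwinding the definitions of the front and back faces, since inserting $0$ or $1$ in the $i$-th slot commutes with postcomposition by $f$. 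Summing with signs gives $\partial_n f_\#=f_\#\partial_n$ on generators, and hence on all of $dC_n(X)$ by linearity.

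Because $f_\#$ commutes with $\partial$, it carries $dZ_n(X)$ into $dZ_n(Y)$ and $dB_n(X)$ into $dB_n(Y)$, so it passes to a well-defined homomorphism $f_\ast=dH_n(f):dH_n(X)\to dH_n(Y)$ on homology classes by $f_\ast[z]=[f_\# z]$. It then remains to check the two functor axioms. For the identity $1_X$ we have $1_X\circ T=T$, so $(1_X)_\#$ is the identity on chains and $(1_X)_\ast$ is the identity on $dH_n(X)$. For composable continuous maps $f:X\to Y$ and $g:Y\to Z$, associativity of composition gives $(g\circ f)\circ T=g\circ(f\circ T)$, whence $(g\circ f)_\#=g_\#\circ f_\#$ on chains and therefore $(g\circ f)_\ast=g_\ast\circ f_\ast$ on homology.

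In truth there is no serious obstacle here: this is the standard verification that a continuous map induces a chain map and hence a map on homology, transported verbatim to the digital cubical setting. The only points deserving care are genuinely routine, namely confirming that composition preserves digital continuity (so that $f\circ T$ is a legitimate $n$-cube), that degenerate cubes map to degenerate cubes (so the construction is well-defined on $dC_n$ rather than merely on $dQ_n$), and that the face operators commute with $f$ (so that $f_\#$ is a chain map). Once these are in place, the functor axioms are purely formal.
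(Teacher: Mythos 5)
Your proposal is correct and follows essentially the same route as the paper: define $f_\#(T)=f\circ T$ on generators of $dQ_n$, observe it preserves degeneracy so it descends to $dC_n$, check it is a chain map via the compatibility of face operators with postcomposition, and then verify the identity and composition axioms. You spell out the chain-map verification ($A_i(f\circ T)=f\circ A_iT$, etc.) a bit more explicitly than the paper does, but the argument is the same.
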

\begin{proof}
We define $dH_n(-)$ on morphisms of $\cat{Dig}$ as follows: Consider a $\kl$ -continuous function $f:X\to Y$ from digital image $\Xk$ to digital image $\Yl$. 
For a digital  $n$-cube $\Tn$ in $dQ_n(X)$, we have $f\circ T\in dQ_n(Y)$. We define functions $f_\#:dQ_n(X)\to dQ_n(Y)$ as $T\mapsto f\circ T$ and extending by linearity, for integers $n\geq 0$. 
Since $f_\#(T)$ is degenerate, if $T\in dD_n(X)$, the map $f_\#$ induces $f_\#:dC_n(X)\to dC_n(Y)$, for integers $n\geq 0$. 
It can be shown that $f_\#$ is a chain map that sends $n$-cycles to $n$-cycles and $n$-boundaries to $n$-boundaries, and therefore induces a map $f_\ast=dH_n(f):dH_n(X)\to dH_n(Y)$ defined as $[T]\mapsto[f_\#(T)]$.

Furthermore, it can be easily shown that for an identity map $id:X\to X$ the induced map  $id_\ast=dH_n(id):dH_n(X)\to dH_n(X)$ is an identity map. Also 
for functions $f:X\to Y$ and $g:Y\to Z$, which are $\kl$- and $(\lambda,\gamma)$-continuous, we have $(g\circ f)_\ast=g_\ast\circ f_\ast:dH_n(X)\to dH_n(Z)$, because $(g\circ f)_\#=g_\#\circ f_\#:dQ_n(X)\to dQ_n(Z)$.
% (see \cite{Massey91} for details of this proof).
\qed\end{proof}
The following can be easily proved.

\begin{prop}
Let $\Xk$ and $\Yl$ be $\kl$-homeomorphic digital images, then $dH_n(X)=dH_n(Y)$, for all $n$.
\end{prop}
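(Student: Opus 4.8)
The plan is to derive this immediately from the functoriality established in Proposition \ref{prop_funcHn}, since homeomorphism preservation is the standard categorical consequence that every functor carries isomorphisms to isomorphisms. Concretely, a $\kl$-homeomorphism is by definition an isomorphism in the category $\cat{Dig}$: it is a $\kl$-continuous bijection $f:X\to Y$ admitting a $(\lambda,\kappa)$-continuous inverse $g=f^{-1}:Y\to X$ satisfying $g\circ f=1_X$ and $f\circ g=1_Y$. So the first step is merely to record these two identities in $\cat{Dig}$ and observe that both $f$ and $g$ are morphisms to which the functor $dH_n(-)$ applies.

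Next I would apply $dH_n(-)$ to each identity and invoke the two functorial properties proved in Proposition \ref{prop_funcHn}, namely that $dH_n$ respects composition and sends identity maps to identity maps. From $g\circ f=1_X$ one obtains
\[
dH_n(g)\circ dH_n(f)=dH_n(g\circ f)=dH_n(1_X)=1_{dH_n(X)},
\]
and symmetrically $f\circ g=1_Y$ yields $dH_n(f)\circ dH_n(g)=1_{dH_n(Y)}$. These two equations exhibit $dH_n(f):dH_n(X)\to dH_n(Y)$ as a homomorphism of Abelian groups with a two-sided inverse $dH_n(g)$, hence as a group isomorphism. Therefore $dH_n(X)\approx dH_n(Y)$ for every $n$, which is the asserted equality (read up to isomorphism, as elsewhere in this section).

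There is essentially no obstacle here: the entire content has already been absorbed into the proof that $dH_n(-)$ is a functor, and the present statement is the formal corollary that functors preserve isomorphisms. The only point deserving a word of care is purely bookkeeping, namely that the inverse morphism is $(\lambda,\kappa)$-continuous rather than $\kl$-continuous, so that when forming compositions one must track the adjacency pairs correctly; but since Proposition \ref{prop_funcHn} already handles a composite $g\circ f$ with mismatched but compatible adjacencies, this requires no new argument.
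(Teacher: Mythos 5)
Your proof is correct and is exactly the argument the paper intends: the proposition is stated right after Proposition \ref{prop_funcHn} with the remark that it "can be easily proved," and the intended easy proof is precisely the categorical fact that a functor sends isomorphisms (here, $\kl$-homeomorphisms in $\cat{Dig}$) to isomorphisms in $\cat{Ab}$. Your handling of the adjacency bookkeeping for the inverse map is also appropriate and complete.
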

%\begin{proof}
%Let $f:X\to Y$ be $\kl$-homeomorphism, then $f_\ast\circ f^{-1}_\ast=(f\circ f^{-1})_\ast=(id_Y)_\ast=id_{dH_n(Y)}$ and $f^{-1}_\ast\circ f_\ast=(f^{-1}\circ f)_\ast=(id_X)_\ast=id_{dH_n(X)}$. Therefore, $f_\ast$ is the required isomorphism from $dH_n(X)$ to $dH_n(Y)$. 
%\qed
%\end{proof}
\begin{prop}\label{prop_Dim}
If $X=\{x_0\}$ is a one-point digital image, then
\[
 dH_n(X) = 
  \begin{cases} 
   \mathbb{Z},  & \text{if }n=0 \\
   0,        & \text{otherwise.}
  \end{cases}
\]
\end{prop}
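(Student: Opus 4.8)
The plan is to compute the chain groups $dC_n(X)$ directly and observe that the complex collapses to something trivial above dimension $0$. First I would note that, since $X$ consists of a single point, for every $n\geq 0$ there is exactly one map $T:I^n\to X$, namely the constant map sending all of $I^n$ to $x_0$; this map is automatically $(2n,\kappa)$-continuous, so $dQ_n(X)$ is the free Abelian group on a single generator and hence $dQ_n(X)\approx\mathbb{Z}$ for every $n\geq 0$.

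Next I would analyze degeneracy. For $n\geq 1$ the unique $n$-cube $T$ satisfies $T(t_1,\ldots,t_n)=x_0$ independently of every coordinate $t_i$, so $T$ is degenerate and therefore $dD_n(X)=dQ_n(X)$. Consequently $dC_n(X)=dQ_n(X)/dD_n(X)=0$ for all $n\geq 1$, which immediately forces $dH_n(X)=dZ_n(X)/dB_n(X)=0$ for $n\geq 1$, since both the cycles $dZ_n(X)$ and the boundaries $dB_n(X)$ lie inside the trivial group.

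It remains to treat the case $n=0$. Here degeneracy cannot occur, as there is no index $i$ with $1\leq i\leq 0$; thus $dD_0(X)=0$ and $dC_0(X)=dQ_0(X)\approx\mathbb{Z}$. Rather than recompute the kernel and image of the relevant boundary maps by hand, I would simply invoke Proposition \ref{Prop_kConnectedZ}: the one-point image is non-empty and (vacuously) $\kappa$-connected, so $dH_0(X)\approx\mathbb{Z}$. Combining the two cases yields the stated formula.

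I do not anticipate a genuine obstacle here; the only point requiring care is the bookkeeping at $n=0$, where one must remember that the single $0$-cube is \emph{not} degenerate---so that $dC_0(X)$ survives---whereas for $n\geq 1$ the unique cube is degenerate and annihilates the entire chain group.
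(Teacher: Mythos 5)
Your argument is correct and complete; the paper actually states this proposition without any proof, and your computation is exactly the routine verification being left to the reader: for each $n$ there is a single constant $n$-cube, which is degenerate for $n\geq 1$ so that $dC_n(X)=0$ there, while $dC_0(X)\approx\mathbb{Z}$ and $dH_0(X)\approx\mathbb{Z}$ follows either from Proposition \ref{Prop_kConnectedZ} as you do, or even more directly from the observation that $dC_1(X)=0$ forces $dB_0(X)=0$.
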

%\begin{proof}
%Since $X$ consists of one point only all the $n$-cubes are degenerate except for $0$-cube, \textit{i.e.} $
% dC_n(X) = 
%  \begin{cases} 
%   \mathbb{Z},  & \text{if }n=0 \\
%   0,        & \text{otherwise,}
%  \end{cases}
%$
%while all the boundary maps are necessarily $0$-maps. Therefore the result follows from $dB_n(X)=0$ for all $n$ and $
% dZ_n(X) = 
%  \begin{cases} 
%   \mathbb{Z},  & \text{if }n=0\\
%   0,        & \text{otherwise.}
%  \end{cases}
%$
%\qed
%\end{proof}

\begin{thm}\label{prop_homotopy}
Let $f,g:X\to Y$ be $\kl$-homotopic maps from digital image $\Xk$ to the digital image $\Yl$. 
Then $f$ and $g$ induce the same maps on homology group $dH_n(X)$, \textit{i.e.} $f_\ast=g_\ast$ .
\end{thm}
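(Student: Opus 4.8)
The plan is to establish the \emph{homotopy axiom} exactly as in ordinary cubical singular homology: produce a chain homotopy between the chain maps $f_\#$ and $g_\#$ induced on $dC_\bullet$ (Proposition \ref{prop_funcHn}), i.e. a family of homomorphisms $D_n:dC_n(X)\to dC_{n+1}(Y)$ satisfying $\partial_{n+1}D_n+D_{n-1}\partial_n=g_\#-f_\#$. Once such $D$ exists, every $n$-cycle $z\in dZ_n(X)$ satisfies $g_\#(z)-f_\#(z)=\partial_{n+1}D_n(z)\in dB_n(Y)$, so $[f_\#(z)]=[g_\#(z)]$ in $dH_n(Y)$, giving $f_\ast=g_\ast$.

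First I would reduce to a one-step homotopy. A $\kl$-homotopy $H:[0,m]_\ZZ\times X\to Y$ restricts, for each $t\in[0,m-1]_\ZZ$, to a map $H^{(t)}:I\times X\to Y$, $H^{(t)}(s,x)=H(t+s,x)$, which is a $\kl$-homotopy from the slice $H_t$ to $H_{t+1}$ (its two slice conditions and its path condition are inherited from those of $H$). If the theorem is known for one-step homotopies, then $(H_t)_\ast=(H_{t+1})_\ast$ for every $t$, and composing these equalities yields $f_\ast=(H_0)_\ast=(H_m)_\ast=g_\ast$. Hence I may assume $m=1$ and work with a single $H:I\times X\to Y$ with $H(0,\cdot)=f$ and $H(1,\cdot)=g$.

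The key step is the \emph{prism operator}. For a digital $n$-cube $\Tn$, define an $(n+1)$-cube $D(T):I^{n+1}\to Y$ by
\[ D(T)(t_1,\ldots,t_n,t)=H\bigl(t,T(t_1,\ldots,t_n)\bigr), \]
and extend by linearity to $dQ_n(X)\to dQ_{n+1}(Y)$. I would first check that $D(T)$ is a genuine digital $(n+1)$-cube, i.e. $(2(n+1),\lambda)$-continuous. Since $I^{n+1}$ carries the adjacency $2(n+1)$, two adjacent points differ by one unit in a single coordinate. If they differ in one of the first $n$ coordinates, continuity of $T$ together with $\kl$-continuity of the slice $H_t=H(t,\cdot)$ forces the images to be equal or $\lambda$-adjacent; if they differ in the last coordinate, the images are $H_x(0)$ and $H_x(1)$ with $x=T(t_1,\ldots,t_n)$, which are equal or $\lambda$-adjacent because $H_x$ is $(2,\lambda)$-continuous. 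Next, if $T$ is degenerate in some $t_j$ with $j\le n$, then so is $D(T)$, so $D$ carries $dD_n(X)$ into $dD_{n+1}(Y)$ and descends to $dC$. Finally I would compute the boundary from $\partial_n=\sum_i(-1)^i(A_i-B_i)$: the faces in the first $n$ directions reproduce $D(A_iT)$ and $D(B_iT)$, while the two faces in the last direction are exactly $A_{n+1}D(T)=f\circ T=f_\#(T)$ and $B_{n+1}D(T)=g\circ T=g_\#(T)$. This gives
\[ \partial_{n+1}D(T)=D(\partial_n T)+(-1)^{n+1}\bigl(f_\#-g_\#\bigr)(T), \]
which is the desired chain-homotopy relation up to an overall degree-dependent sign absorbed by replacing $D_n$ with $(-1)^nD_n$.

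The \textbf{main obstacle} is the continuity verification of the prism $D(T)$: it is precisely here that \emph{both} clauses of the digital homotopy definition are indispensable — $\kl$-continuity of the slices $H_t$ handles adjacencies in the cube directions, while $(2,\lambda)$-continuity of the paths $H_x$ handles the single step in the homotopy direction — and it is also where the reduction to $m=1$ is used, since only then does the homotopy parameter fit into the last factor $I$ of $I^{n+1}$. Everything else (well-definedness on the quotient $dC$, the face bookkeeping in the boundary computation, and the passage from the chain-homotopy identity to $f_\ast=g_\ast$ on cycles) is routine.
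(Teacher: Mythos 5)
Your proof is correct and follows essentially the same route as the paper: both construct a prism/chain-homotopy operator from the subdivided homotopy and deduce $f_\ast=g_\ast$ from the identity $\partial D+D\partial=g_\#-f_\#$ applied to cycles. The only differences are cosmetic: the paper sums the one-step prisms directly as $\Phi_n(T)=\sum_{j=1}^mF_j(id\times T)$, using a telescoping cancellation in place of your reduction to $m=1$ and composition of induced maps, and it places the homotopy coordinate first rather than last, which removes the need for your sign adjustment $(-1)^nD_n$.
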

\begin{proof}
Let $F:[0,m]_\mathbb{Z}\times X\to Y$ be the homotopy from $f$ to $g$. The homotopy $F$ can be subdivided  into functions $F_j:I\times X\to Y$ defined as $F_j(t,x)=F(j+t-1,x)$ for $j\in [1,m]_\mathbb{Z}$. 
Observe that $F_1(0,x)=f(x)$ and $F_m(1,x)=g(x)$. 
In order to show that $f_\ast=g_\ast$, we follow the standard method of algebraic topology, which is, to construct a map $\Phi_n:dQ_n(X)\to dQ_{n+1}(Y)$ that contains similar information as the Homotopy $F$, and satisfies: \begin{equation}\label{eq_ch_homotopy}
g_\#-f_\#=\partial_{n+1}\Phi_n+\Phi_{n-1}\partial_n
\end{equation} 
%Such a map $\Phi$ is called chain homotopy from $f_\#$ to $g_\#$, and $f_\#$ and $g_\#$ are said to be chain homotopic. 
%Chain homotopic maps induce same maps on homology. 
Define $\Phi_n:dQ_n(X)\to dQ_{n+1}(Y)$ as $T\mapsto \sum_{j=1}^m F_j(id\times T)$ and extending by linearity, where $id:[0,1]_\mathbb{Z}\to [0,1]_\mathbb{Z}$ is identity function. 
We need to compute the boundary $\partial\Phi$ to verify eq. \ref{eq_ch_homotopy}. One can observe the following:
\begin{equation}
A_1\Phi_nT=f_\#(T)+\sum_{j=2}^mF_j(0, T)
\hspace{10pt}\text{ and }\hspace{10pt}
B_1\Phi_n(T)=\sum_{j=1}^{m-1} F_j(1, T)+g_\#(T) \label{eq1}
\end{equation}
\begin{equation}
A_i\Phi_n(T)=\Phi_{n-1}A_{i-1}T,          
\hspace{10pt}\text{ and }\hspace{10pt}
B_i\Phi_n(T)=\Phi_{n-1}B_{i-1}T, \hspace{10pt} i\in [2,n+1]_\mathbb{Z}
\label{eq3}
\end{equation}
\begin{equation}\label{eq5}
F_j(1, T)=F_{j+1}(0, T), \hspace{10pt} j\in [1,m-1]_\mathbb{Z}
\end{equation}
Using these equations we can calculate the boundary of $\Phi$:
\begin{align*}
&&\partial\Phi_n(T)=&\sum_{i=1}^{n+1}(-1)^i(A_i\Phi_n(T)-B_i\Phi_n(T))\\
& &=&g_\#(T)-f_\#(T)+\sum_{i=2}^{n+1}(-1)^i(A_i\Phi_n(T)-B_i\Phi_n(T))&\\&&&\hspace{50pt}\text{using eqs. \ref{eq1} and \ref{eq5}, for $i=1$, and using eqs. \ref{eq3} }
\\&&&\hspace{50pt}\text{and substituting $j=i-1$ for $i>1$}\\
& &=&g_\#(T)-f_\#(T)-\Phi_{n-1}\partial T \hspace{10pt}\text{by definition of $\partial( T)$}
 \end{align*}
It can be shown that $\Phi$ maps degenerate digital $n$-cubes in $\Xk$ to degenerate digital $(n+1)$-cubes in $\Yl$, inducing a homomorphism $\varphi_n:dC_n(X)\to dC_{n+1}(Y)$. If we choose $T$ to be a non-degenerate $n$-cycle, \textit{i.e.} $T\in dZ_n(X)$, then we get $g_\#(T)-f_\#(T) \in dB_n(Y)$. Therefore in $dH_n(Y)$ we have, \begin{equation*}
[g_\#(T)-f_\#(T)]=g_\ast([T])-f_\ast([T])=0\textit{\hspace{10pt}}\Rightarrow\textit{\hspace{10pt}} g_\ast=f_\ast
\end{equation*} \qed
\end{proof}
\begin{cor}
If $\Xk$ and $\Yl$ be homotopically equivalent digital images, then $dH_n(X)\approx dH_n(Y)$.
\end{cor}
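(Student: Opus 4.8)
The plan is to derive this as a formal consequence of the two results just established: functoriality of $dH_n(-)$ (Proposition \ref{prop_funcHn}) and homotopy invariance (Theorem \ref{prop_homotopy}). First I would unpack the hypothesis: saying that $\Xk$ and $\Yl$ are homotopically equivalent gives, by definition, a $\kl$-continuous map $f:X\to Y$ and a $(\lambda,\kappa)$-continuous map $g:Y\to X$ such that $g\circ f\simeq_{(\kappa,\kappa)} 1_X$ and $f\circ g\simeq_{(\lambda,\lambda)} 1_Y$, where $1_X$ and $1_Y$ are the respective identity maps.

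Next I would pass to homology. By Proposition \ref{prop_funcHn}, the maps $f$ and $g$ induce homomorphisms $f_\ast:dH_n(X)\to dH_n(Y)$ and $g_\ast:dH_n(Y)\to dH_n(X)$, and the same proposition supplies the two facts I need about the functor: it respects composition, so $(g\circ f)_\ast=g_\ast\circ f_\ast$ and $(f\circ g)_\ast=f_\ast\circ g_\ast$, and it sends identity maps to identity maps, so $(1_X)_\ast$ and $(1_Y)_\ast$ are the identity homomorphisms on $dH_n(X)$ and $dH_n(Y)$.

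Now I would invoke Theorem \ref{prop_homotopy}. Since $g\circ f$ and $1_X$ are homotopic self-maps of $\Xk$, the theorem yields $(g\circ f)_\ast=(1_X)_\ast$; combining with functoriality gives $g_\ast\circ f_\ast=\mathrm{id}_{dH_n(X)}$. The symmetric argument applied to $f\circ g\simeq 1_Y$ gives $f_\ast\circ g_\ast=\mathrm{id}_{dH_n(Y)}$. Hence $f_\ast$ and $g_\ast$ are mutually inverse homomorphisms, so $f_\ast$ is an isomorphism and $dH_n(X)\approx dH_n(Y)$ for every $n$.

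I expect no genuine obstacle here, since the statement is the standard categorical consequence that homotopy-invariant functors carry homotopy equivalences to isomorphisms. The only point meriting a moment's care is that Theorem \ref{prop_homotopy} is phrased for maps $X\to Y$ between two images, whereas I am applying it to the self-maps $g\circ f$ and $1_X$ (and likewise $f\circ g$ and $1_Y$); this is simply the specialization in which domain and codomain coincide and the two adjacency relations are equal, so the theorem applies verbatim.
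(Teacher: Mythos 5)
Your proposal is correct and is exactly the argument the paper intends: the paper's own proof is the one-line statement ``Follows from Proposition \ref{prop_homotopy}, and functoriality of $dH_n$,'' and you have simply written out that standard argument in full (identities go to identities, compositions to compositions, homotopic maps induce equal maps, hence $f_\ast$ and $g_\ast$ are mutually inverse). No issues.
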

\begin{proof}
Follows from Proposition \ref{prop_homotopy}, and functoriality of $dH_n$.
\qed
\end{proof}

\begin{ex}
A digital image is said to be $\kappa$-contractible \cite{Boxer_99}, if its identity map is $(\kappa,\kappa)$-homotopic to a constant function $c_p$ for some $p\in X$.
For a $\kappa$-contractible digital image $\Xk$, one can compute the homology groups using Propositions \ref{prop_Dim} and \ref{prop_homotopy} as $
 dH_n(X) = 
  \begin{cases} 
   \mathbb{Z},  & \text{if }n=0 \\
   0,        & \text{otherwise,}
  \end{cases}
$
because a $\kappa$-contractible digital image is homotopy equivalent to a point \cite{Boxer_99}.

\end{ex}
\section{Digital Hurewicz theorem}\label{Sec_Hure}
%As mentioned earlier Hurewicz theorem is one of the central results in algebraic topology which, in dimension 1, relates the Fundamental group of a topological space to its first homology group using  a homomorphism called Hurewicz map. The Hurewicz theorem states that the Hurewicz map induces an isomorphism from Abelianized Fundamental group to first homology group, if the space is path-connected. In this section we attempt to develop a digital analogue to the Hurewicz theorem of algebraic topology. We relate the fundamental group of digital images given by Boxer \cite{Boxer_99}, with first  homology group developed in Section \ref{Sec_CubSing}, which induces an isomorphism from Abelianized Fundamental group to first homology group (Theorem \ref{thm_Hure}). 

\begin{lem}\label{lem_Hure}
Let $(X,p,\kappa)$ be a digital image with basepoint $p$ and $\kappa$-adjacency relation and ${\rm{\Pi}}^\kappa_1(X,p)$ be the fundamental group. 
Then there is a homomorphism
$
\phi:\piX\to dH_1(X)$ given by  
$[ f]_{_{\rm\Pi}}\mapsto \left[\sum_{j=1}^m f_j\right],
$
where $\sum_{j=1}^m f_j$ is the subdivision of $\kappa$-loop $f$. 

\end{lem}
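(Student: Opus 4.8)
The plan is to establish three things: that $\phi$ takes values in $dH_1(X)$, that it is independent of the representative chosen within a loop class (well-definedness), and that it respects products. That the image of a $\kappa$-loop is a $1$-cycle, so that $\big[\sum_j f_j\big]$ is a legitimate element of $dH_1(X)$, is already recorded as property (4) of subdivisions, so the genuine content lies in well-definedness, which is where I would spend most of the effort.

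Well-definedness amounts to showing that $[f]_\Pi=[g]_\Pi$ forces $\big[\sum_j f_j\big]=\big[\sum_j g_j\big]$ in $dH_1(X)$. By the definition of the loop-class relation there are trivial extensions $f'$ of $f$ and $g'$ of $g$, defined on a common interval $[0,N]_\ZZ$, together with an endpoint-fixing homotopy $H:[0,M]_\ZZ\times[0,N]_\ZZ\to X$ from $f'$ to $g'$. I would split the argument into two independent reductions. First, passing from a path to a trivial extension only inserts constant $\kappa$-paths, and in the unit subdivision these contribute constant $1$-cubes, which are degenerate and therefore zero in $dC_1(X)$. Hence $\sum_j f'_j=\sum_j f_j$ and $\sum_j g'_j=\sum_j g_j$ already as elements of $dC_1(X)$, which reduces the problem to proving $\big[\sum_j f'_j\big]=\big[\sum_j g'_j\big]$.

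The heart of the proof is to produce an explicit $2$-chain bounding this difference. I would subdivide the homotopy into unit squares $H_{i,j}:I^2\to X$ defined by $H_{i,j}(s,t)=H(i+s-1,\,j+t-1)$ for $i\in[1,M]_\ZZ$ and $j\in[1,N]_\ZZ$. A preliminary check is that each $H_{i,j}$ is an honest digital $2$-cube: since $(I^2,4)$ uses only axis-adjacency, two $4$-adjacent points of $I^2$ differ in exactly one coordinate by one unit, so the coordinate-wise $(2,\kappa)$-continuity of the slices of $H$ supplied by the homotopy definition yields $(4,\kappa)$-continuity of $H_{i,j}$. Then I would compute $\partial\big(\sum_{i,j}H_{i,j}\big)$ in $dC_1(X)$. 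The faces in the homotopy direction telescope in $i$ and leave (up to an overall sign) $\sum_j g'_j-\sum_j f'_j$, coming from the edges at $s=M$ and $s=0$ where $H$ restricts to $g'$ and $f'$; the faces in the loop direction telescope in $j$ and leave the edges at $t=0$ and $t=N$, which are constant at the fixed basepoint $p$ and hence degenerate, so they vanish in $dC_1(X)$. This exhibits $\sum_j g'_j-\sum_j f'_j\in dB_1(X)$, giving $\big[\sum_j f'_j\big]=\big[\sum_j g'_j\big]$ and completing well-definedness. Equivalently, this step is precisely the chain-homotopy mechanism of Theorem \ref{prop_homotopy} applied to the subdivided homotopy, and one could invoke that construction rather than repeating the boundary count.

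Finally, for the homomorphism property I would observe that the unit subdivision of the product $f\ast g$ is literally the concatenation of the subdivisions of $f$ and of $g$: its first $m_1$ unit cubes are the $f_j$ and its remaining $m_2$ are the $g_j$. Therefore $\sum_j(f\ast g)_j=\sum_j f_j+\sum_j g_j$ in $dC_1(X)$, whence $\phi([f]_\Pi\ast[g]_\Pi)=\phi([f]_\Pi)+\phi([g]_\Pi)$. I expect the boundary computation of the subdivided homotopy, in particular keeping the signs straight and confirming that every side face is either a cancelling internal face or a degenerate constant cube, to be the one genuinely delicate step; the trivial-extension reduction and the homomorphism property are essentially formal.
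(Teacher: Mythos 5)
Your proposal is correct and follows essentially the same route as the paper: reduce to trivial extensions by discarding degenerate constant $1$-cubes, subdivide the endpoint-fixing homotopy into unit digital $2$-cubes, and show via telescoping of internal faces and degeneracy of the constant edge faces that the boundary of the resulting $2$-chain is $\sum_j g_j-\sum_j f_j$, with the homomorphism property following formally from concatenation of subdivisions. The only additions beyond the paper's argument are minor but welcome checks (that each $H_{i,j}$ is $(4,\kappa)$-continuous, and the explicit remark that the cycle condition makes $\phi$ land in $dH_1(X)$).
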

\begin{proof}
\text{\textit{Well-defined}:} We need to show that $\phi$ is a well-defined. Consider $\kappa$-loops $f$ and $g$ of lengths $m_1$ and $m_2$, respectively, both based at point $p\in X$ such that $[ f]_{_{\rm\Pi}}=[ g]_{_{\rm\Pi}}\in\piX$. 
%We need to show that $[\sum_{j=1}^{m_1}f_j]=[\sum_{j=1}^{m_2}g_j]$. 
Now $f$ and $g$ are in the same loop class implies that there 
are trivial extensions $f'$ and $g'$ of $f$ and $g$, respectively 
such that there exists a homotopy $H:[0,m]_\mathbb{Z}\times [0,M]_\mathbb{Z}\to X $ from $f'$ to $g'$ that holds the end points fixed. 
Subdivide $H$ into digital $2$-cubes ${j,k}:I^2\to X$ defined as $(s,t)\mapsto H(j+s-1,k+t-1)$, for $j\in [1,m]_\mathbb{Z}$ and $k\in [1,M]_\mathbb{Z}$ (see Figure \ref{fig_homot_Hure}).
 We shall show that the boundary $\partial\left(\sum_{j,k} H_{j,k}\right)$ is equal to the difference of     $\sum_{j=1}^{m_1}f_j$ and $\sum_{j=1}^{m_2}g_j,$ 
 which implies that the classes of these subdivisions are equal in the homology group $dH_1(X)$. 
 Before computing $\partial\left(\sum_{j,k} H_{j,k}\right)$, note that the following equations hold:
\begin{equation}\label{note2}
\sum_{k=1}^MA_1H_{1,k}=\sum_{j=1}^Mf'_j=\sum_{j=1}^{m_1}f_j
\hspace{10pt}\text{and}\hspace{10pt}
\sum_{k=1}^MB_1H_{m,k}=\sum_{j=1}^Mg'_j=\sum_{j=1}^{m_2}g_j
\end{equation}
The only difference between $f$ and its trivial extension $f'$ is that $f'$ pauses more frequently for rest than $f$ and whenever a path pauses for rest, its subdivision is trivial at that point in $dC_1(X)$ (being degenerate in $dQ_1(X)$). Further, it can be noted that:
\begin{align}\label{note3}\nonumber
&A_1H_{j,k}=B_1H_{j-1,k},\hspace{5pt} 
j\in[2,m]_\mathbb{Z},k\in[1,M]_\mathbb{Z}\\
&\text{and}\hspace{10pt}
A_2H_{j,k}=B_2H_{j,k-1},\hspace{5pt} 
j\in[1,m]_\mathbb{Z},k\in[2,M]_\mathbb{Z}
\end{align}
\begin{equation}\label{note4}
A_2H_{j,1}=B_2H_{j,M}=c_p, \hspace{8pt} j\in [1,m]_\mathbb{Z},
\end{equation}	
where $c_p$ is the constant path of length 1 at basepoint $p\in X$. 
Using eqs. \ref{note2} to \ref{note4}, it can be shown that $\partial\left(\sum_{j,k} H_{j,k}\right)=\sum_{j=1}^{m_2}g_j-\sum_{j=1}^{m_1}f_j\in  dC_1(X)$. This proves that $\phi$ is well-defined.\\
\begin{figure}[t]
\begin{multicols}{2}\centering
\includegraphics[scale=.3]{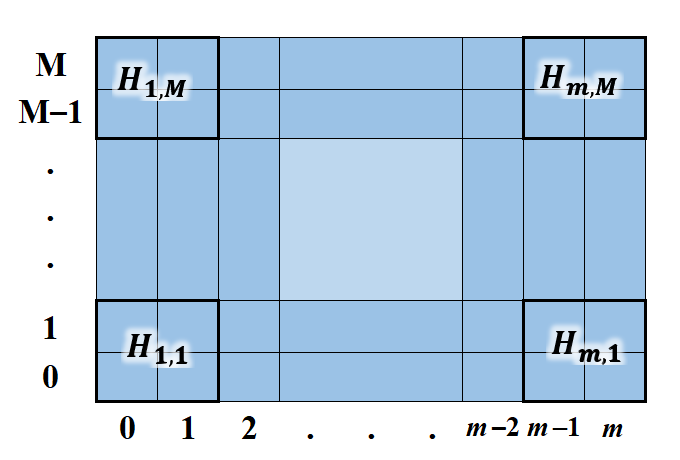}\\
\includegraphics[scale=.3]{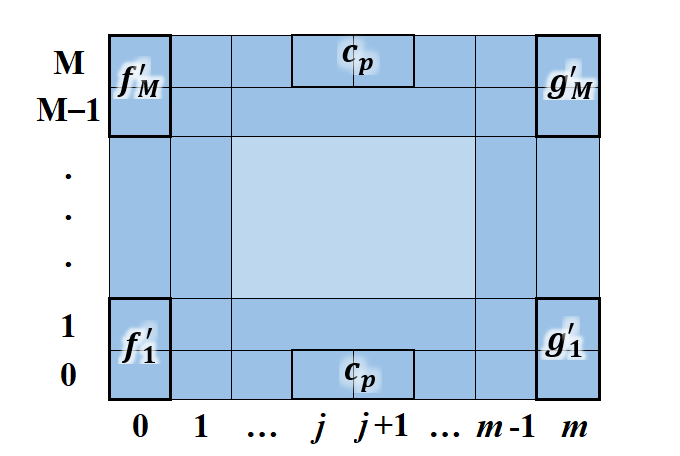}
\end{multicols}
\begin{multicols}{2}
\begin{center}
(a)\\(b)
\end{center}
\end{multicols}
\caption{Domain of  $H$ (proof of Lemma \ref{lem_Hure})}
(a) Subdivision of $H$ into digital $2$-cubes $H_{jk}$ (b) digital $1$-cubes involved in $\partial (H_{jk})$ 
\label{fig_homot_Hure}
\end{figure}
\text{\textit{Homomorphism}:} 
Consider $\kappa$-loops $f$ and $g$ of lengths $m_1$ and $m_2$, respectively, both based at point $p\in X$. Then
\begin{align*}
\phi([ f]_{_{\rm\Pi}}\ast[ g]_{_{\rm\Pi}})&=\phi([ f\ast g]_{_{\rm\Pi}})
				=\left[\sum_{j=1}^{m_1+m_2}(f\ast g)_j\right]
				=\left[\sum_{j=1}^{m_1}(f\ast g)_j+\sum_{j=m_1+1}^{m_1+m_2}(f\ast g)_j\right]\\
				&=\left[\sum_{j=1}^{m_1}f_j+\sum_{j=1}^{m_2}g_j\right]
				=\left[\sum_{j=1}^{m_1}f_j\right]+\left[\sum_{j=1}^{m_2}g_j\right]
				=\phi([ f]_{_{\rm\Pi}})+\phi([ g]_{_{\rm\Pi}})
\end{align*}\qed

%where did this second last equation come

\end{proof}
We say that the map $\phi$ defined in Lemma \ref{lem_Hure} is \textit{Digital Hurewicz map}.
%\begin{prop}
%The digital Hurewicz map is natural in the following sense: 
%\\Let $h:((X,p_X),\kappa)\to ((Y,p_Y),\lambda)$ is $\kl$-continuous map from pointed digital image $(X,p_X)$ to pointed digital image $(Y,p_Y)$, then the following diagram commutes:
%\[ \xymatrix{
%&&&\Pi_1^\kappa(X,p_X) \ar[r]^{h_\ast} \ar[d]_{\phi} & \Pi_1^\lambda(Y,p_Y) \ar[d]^{\phi} \\
%&&& dH_1(X)\ar[r]^{h_\ast}	& dH_1(Y) 
%}\]
%\end{prop}
%\begin{proof}
%Consider $[f]\in\Pi_1^\kappa(X,p_X)$.
%\begin{align*}
%\phi\circ h_\ast([f])	&=\phi([h\circ f])
%						=\left[\sum_j (h\circ f)_j\right]
%\stackrel{\mathclap{\small\mbox{(1)}}}
%						{=}\left[\sum_j h\circ f_j\right]
%						\\&=\left[h_\#\left(\sum_j  f_j\right)\right]
%						=h_\ast \left[\sum_j  f_j\right]
%						=h_\ast\circ\phi [f]
%\end{align*}
%(1) follows because: Consider $(h\circ f)_j,h\circ f_j:I\to Y$. We have $(h\circ f)_j(t)=(h\circ f )(j+t-1)=h(f(j+t-1))=h\circ f_j(t)$
%\qed\end{proof}
\begin{lem}\label{lems_Hurewicz}
Let $\Xk$ be a digital image.
\begin{enumerate}
\item \label{lems_Hurewicz_1} Consider a digital  $1$-cube $T\in dC_1(X)$ and let $\overline{T}$ denote the `reverse' of $T$, \textit{i.e.} $\overline{T}\in dC_1(X)$, $\overline{T}(t)=T(1-t)$. Then class of $T+\overline{T}$ is trivial in $dH_1(X)$.
\item \label{lems_Hurewicz_2} Consider digital  $2$-cube $T\in dC_n( X)$  and define $\kappa$-paths $T_0, T_1, T_2$ and $T_3$ to be $A_1T, A_2T,B_1T$ and $B_2T$, respectively. Then there is a trivial extension of $T_0$ homotopic to $T_1\ast T_2\ast\overline{T_3}$. 
%(see Figure \ref{fig_boundary2cube}).
\end{enumerate}
\end{lem}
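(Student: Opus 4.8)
The plan is to prove both parts by explicit constructions, exploiting the fact that the faces of a digital $2$-cube live on mutually adjacent corners.

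For part \ref{lems_Hurewicz_1}, the strategy is to exhibit a single digital $2$-cube $S:I^2\to X$ whose boundary equals $T+\overline{T}$ in $dC_1(X)$, which immediately places $T+\overline{T}$ in the image of $\partial_2$ and hence makes its class trivial in $dH_1(X)$. Writing $u=T(0)$ and $v=T(1)$ (so that $u,v$ are $\kappa$-adjacent or equal), I would define $S$ on the four corners of $I^2$ by $S(0,0)=v$, $S(1,0)=u$, $S(0,1)=v$, $S(1,1)=v$. Since every pair of corners then maps to adjacent-or-equal points, $S$ is $(4,\kappa)$-continuous. A direct face computation gives $A_1S$ and $B_2S$ constant (hence degenerate, i.e.\ zero in $dC_1(X)$), while $B_1S=T$ and $A_2S=\overline{T}$. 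Substituting into $\partial_2S=-A_1S+B_1S+A_2S-B_2S$ then yields $\partial S=T+\overline{T}$ in $dC_1(X)$, which is exactly what is needed.

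For part \ref{lems_Hurewicz_2}, I would first fix notation for the corners, $a=T(0,0)$, $b=T(1,0)$, $c=T(0,1)$, $d=T(1,1)$, so that $T_0:a\to c$, $T_1:a\to b$, $T_2:b\to d$, $T_3:c\to d$, and $T_1\ast T_2\ast\overline{T_3}$ traces $a\to b\to d\to c$, a $\kappa$-path of length $3$ from $a$ to $c$. The key observation is that all four corners of $I^2$ are pairwise $c_2$-adjacent (the diagonal pairs differ in both coordinates by one unit), so $a,b,c,d$ are pairwise $\kappa$-adjacent or equal in $X$. To reconcile lengths, I would take the trivial extension $\widetilde{T_0}=c_a\ast T_0\ast c_c$ of $T_0$, which has length $3$ and successive values $a,a,c,c$, and then define a one-step homotopy $H:[0,1]_\mathbb{Z}\times[0,3]_\mathbb{Z}\to X$ whose two rows $H(0,\cdot)$ and $H(1,\cdot)$ are the paths with successive values $a,a,c,c$ and $a,b,d,c$, respectively. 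The verification reduces to the two conditions of a digital path-homotopy holding endpoints fixed: each row is a $\kappa$-path by the corner adjacencies, and each column $(a,a)$, $(a,b)$, $(c,d)$, $(c,c)$ is a $(2,\kappa)$-path, with the two endpoint columns constant at $a$ and at $c$. A final routine check confirms that $c_a\ast T_0\ast c_c$ satisfies the formal definition of a trivial extension of $T_0$ (with $k=1$, the single non-constant factor $T_0$ occupying position $i_1=2$).

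I expect the main obstacle to be conceptual rather than computational: recognizing that the paper's notion of path-homotopy requires only row-wise and column-wise $(2,\kappa)$-continuity, not full two-dimensional digital continuity, so that the naive one-step grid above is already a legitimate homotopy. Once this is noticed, the only remaining points that need care are choosing the correct trivial extension to match the length-$1$ path $T_0$ with the length-$3$ path $T_1\ast T_2\ast\overline{T_3}$, and keeping the orientation conventions for the faces $A_iT,B_iT$ straight so that the reversal $\overline{T_3}$ and, in part \ref{lems_Hurewicz_1}, the signs in $\partial_2$ come out correctly.
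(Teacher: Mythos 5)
Your constructions are correct and essentially the same as the paper's: for part (1) the paper likewise exhibits a single $2$-cube with two degenerate faces whose remaining faces are $T$ and $\overline{T}$ (it takes $S(t,0)=T(t)$, $S(t,1)=T(0)$, a relabelling of your corner assignment), and for part (2) it uses exactly your one-step homotopy between the rows $a,a,c,c$ and $a,b,d,c$. One caution: your remark that all four corners of $I^2$ are pairwise adjacent appeals to $c_2$-adjacency, whereas a digital $2$-cube is only $(2n,\kappa)=(4,\kappa)$-continuous, i.e.\ continuous with respect to $c_1$ on $I^2$, so the diagonal pairs $a,d$ and $b,c$ need not map to adjacent-or-equal points; this does not damage your proof, since the rows and columns of your homotopy only ever use the four edge adjacencies $a\sim b$, $a\sim c$, $b\sim d$, $c\sim d$, but the blanket claim should be dropped.
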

\begin{proof}\textcolor{white}{dd}
\begin{enumerate}
\item Let $S:I^2\to X$ be a basis element of $dC_2(X)$ defined as $S(t,0)=T(t)$ and $S(t,1)=T(0)$, for $t=0,1$ (see Figure \ref{fig_boundary2cube}(a)). Note that the back $1$-face $B_1S=\overline{T}$ (see Figure \ref{fig_boundary2cube}(a)) and thus the boundary $\partial S=T+\overline{T}$ in $dC_1(X)$ making the class of $T+\overline{T}$ trivial in $dH_1(X)$.
\item Consider the homotopy $H$ defined as $H:[0,3]_\mathbb{Z}\times I\to X$ as\\
$H(0,0)=T_1(0)$,
$H(1,0)=T_2(0)$,
$H(2,0)=T_3(1)$,
$H(3,0)=T_3(0)$,\\
$H(0,1)=H(1,1)=T_0(0)$,
$H(2,1)=H(3,1)=T_0(1)$ (see Figure \ref{fig_boundary2cube}(b) and (c)).\\
Clearly, $H(t,0)=T_1\ast T_2\ast\overline{T_3}(t)$ and $H(t,1)$ is a trivial extension of $T_0$.
\end{enumerate}
\qed
\end{proof}
\begin{figure}[h]
\setlength{\columnsep}{-1.15cm}
\begin{multicols}{3}\centering\noindent
\hspace{-40pt}\includegraphics[scale=.25]{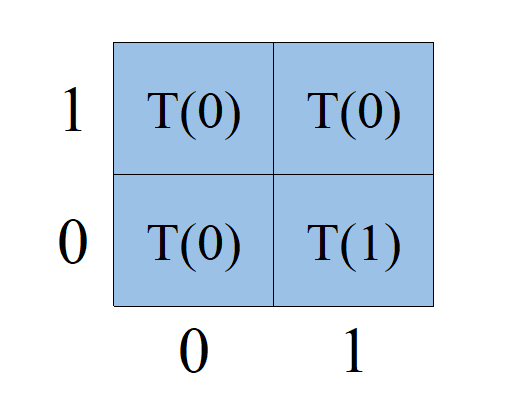}\\
\hspace{-40pt}\includegraphics[scale=.25]{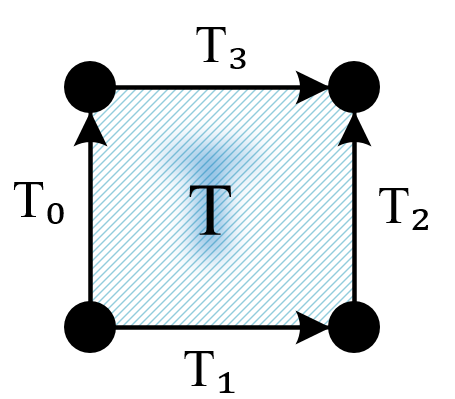}\\
\includegraphics[scale=.25]{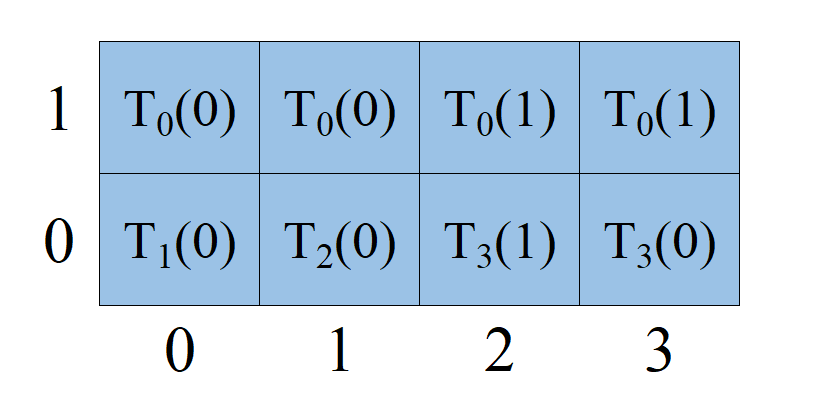}
\end{multicols}\vspace{-20pt}
\begin{multicols}{3}\centering
\hspace{-40pt}(a)\\\hspace{-40pt}(b)\\(c)
\end{multicols}\vspace{-10pt}
\caption{(a) Digital $2$-cube $S$, and (b) faces of digital $2$-cube $T$, (c) Homotopy $H$ (proof of Lemma \ref{lems_Hurewicz})}
{(a) Domain of $S$ with images labeled on each pixel (b) Schematic representation of $T$  (c) Domain  of $H$ with images labeled on each pixel}\label{fig_boundary2cube}
\end{figure}
The following Lemma (quoted from \cite{AT_Rotman} with some minor changes) is required in the proof of digital Hurewicz theorem (Theorem \ref{thm_Hure}). 

\begin{lem}{\text{\emph{Substitution Principle} }}\label{Lem_Substitution}~\\
Let $F$ be a free Abelian group with basis $B$, let $x_0,x_1,\ldots,x_N$ be a list of elements in $B$, possibly with repetitions and assume that $\sum_{i=0}^km_ix_i=\sum_{i=k+1}^{N}m_ix_i$, where $m_i\in\mathbb{Z}$ and $0\leq k<N$. 
If $G$ is any Abelian group and $y_0,y_1,\ldots,y_N$ is a list of elements in $G$ such that $x_i=x_j \Rightarrow y_i=y_j$, then $\sum_{i=0}^km_iy_i=\sum_{i=k+1}^Nm_iy_i$ in $G$.
\end{lem}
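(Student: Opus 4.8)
The plan is to exploit the defining universal property of the free Abelian group $F$: a homomorphism out of $F$ is completely and freely determined by its values on the basis $B$, with no relations to respect. The hypothesis $x_i=x_j\Rightarrow y_i=y_j$ is precisely what is needed to turn the list $y_0,\ldots,y_N$ into a well-defined assignment of target values to the basis elements occurring among $x_0,\ldots,x_N$, and once such an assignment exists the conclusion follows by pushing the assumed identity through the induced homomorphism.

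First I would build a function on the basis. Let $\{b_1,\ldots,b_r\}\subseteq B$ be the distinct basis elements appearing among $x_0,\ldots,x_N$. For each such $b_s$, choose any index $i$ with $x_i=b_s$ and set $\psi(b_s)=y_i$; this is independent of the chosen index exactly because $x_i=x_j$ forces $y_i=y_j$, so $\psi$ is well defined. Extend $\psi$ to all of $B$ by declaring $\psi(b)=0$ for every basis element $b$ not appearing in the list. In particular $\psi(x_i)=y_i$ for every $i$, since each $x_i$ lies in $B$.

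Next I would invoke the universal property of the free Abelian group $F$ on $B$: the set map $\psi\colon B\to G$ into the Abelian group $G$ extends uniquely to a group homomorphism $\Psi\colon F\to G$ with $\Psi(b)=\psi(b)$ for all $b\in B$. Applying $\Psi$ to both sides of the assumed identity $\sum_{i=0}^k m_ix_i=\sum_{i=k+1}^N m_ix_i$ and using that $\Psi$ is additive and respects integer multiples gives $\sum_{i=0}^k m_i\Psi(x_i)=\sum_{i=k+1}^N m_i\Psi(x_i)$. Since $\Psi(x_i)=\psi(x_i)=y_i$, this reads $\sum_{i=0}^k m_iy_i=\sum_{i=k+1}^N m_iy_i$ in $G$, which is the desired conclusion.

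The only genuine content, and hence the step I would be most careful about, is the well-definedness of $\psi$ on repeated basis elements; everything after that is a formal application of the universal property. Because $F$ is free on $B$, no compatibility condition beyond $x_i=x_j\Rightarrow y_i=y_j$ is needed, as the basis elements impose no relations and $\psi$ may be prescribed on them arbitrarily. I expect no real obstacle beyond carefully tracking that the single homomorphism $\Psi$ sends each $x_i$ to its partner $y_i$.
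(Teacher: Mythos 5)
Your proposal is correct and is essentially the same argument as the paper's: define a map on the basis sending each $x_i$ to $y_i$ and everything else to $0$ (well-defined by the hypothesis $x_i=x_j\Rightarrow y_i=y_j$), extend to a homomorphism $F\to G$ by the universal property of the free Abelian group, and apply it to the given identity. You merely spell out the well-definedness step in slightly more detail than the paper does.
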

\begin{proof}
Define a function $\eta:B\to G$ with $\eta (x_i)=y_i$ for all $i=1,2,\ldots, N$ and $\eta (x)=0$, otherwise ($\eta$ is well-defined because of the given hypothesis). Extend the map $\eta$ by linearity to $\eta :F\to G$. Thus $0=\eta \left(\sum_{i=0}^km_ix_i-\sum_{i=k+1}^Nm_ix_i\right)=\sum_{i=0}^km_iy_i-\sum_{i=k+1}^Nm_iy_i$.
\qed
\end{proof}
\newpage
\begin{thm}\label{thm_Hure}\text{\emph{Digital Hurewicz Theorem}}\\
If $\Xk$ is a $\kappa$-connected digital image with $p\in X$ then the digital Hurewicz map (defined in Lemma \ref{lem_Hure}) is surjective with $ker\phi$ as commutator subgroup of the fundamental group $\piX$. Hence, Abelianized Fundamental group is isomorphic to $dH_1(X)$.
\end{thm}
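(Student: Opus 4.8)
The plan is to construct an explicit two-sided inverse to the Hurewicz map at the abelianized level, which yields surjectivity and the kernel computation simultaneously. Since $dH_1(X)$ is Abelian, every commutator of $\piX$ lies in $\ker\phi$, so $\phi$ factors through a homomorphism $\overline{\phi}\colon\piX/[\piX,\piX]\to dH_1(X)$ on the abelianization. It then suffices to produce a homomorphism $\overline{\psi}\colon dH_1(X)\to\piX/[\piX,\piX]$ with $\overline{\psi}\,\overline{\phi}=\mathrm{id}$ and $\overline{\phi}\,\overline{\psi}=\mathrm{id}$: the first identity makes $\overline{\phi}$ injective (equivalently $\ker\phi=[\piX,\piX]$), and the second makes it surjective, so that $\overline{\phi}$ is the desired isomorphism.

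To build $\psi$, I use $\kappa$-connectedness to fix, for every $x\in X$, a $\kappa$-path $\gamma_x$ from $p$ to $x$, with $\gamma_p$ the constant path (``spokes''). To each digital $1$-cube $T\colon I\to X$ with $T(0)=a$ and $T(1)=b$ I assign the $\kappa$-loop $\ell_T=\gamma_a\ast T\ast\overline{\gamma_b}$ based at $p$ and set $\psi(T)=[\ell_T]_{_{\rm\Pi}}$ in the additively written group $\piX/[\piX,\piX]$. Degenerate $T$ produce null-homotopic loops, so $\psi$ is defined on $dC_1(X)$, and I extend it additively to all $1$-chains.

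The crucial step is that $\psi$ vanishes on $1$-boundaries, so that it descends to $\overline{\psi}\colon dH_1(X)\to\piX/[\piX,\piX]$. For a digital $2$-cube $T$ with corner values $a=T(0,0)$, $b=T(1,0)$, $c=T(1,1)$, $d=T(0,1)$ and faces $T_0,T_1,T_2,T_3$ as in Lemma \ref{lems_Hurewicz}, one has $\partial_2T=T_1+T_2-T_0-T_3$. Lemma \ref{lems_Hurewicz}(\ref{lems_Hurewicz_2}) gives a trivial extension of $T_0$ homotopic to $T_1\ast T_2\ast\overline{T_3}$; inserting and cancelling the backtracks $\overline{\gamma_b}\ast\gamma_b$ and $\overline{\gamma_c}\ast\gamma_c$ (using that the reverse loop is the inverse loop class, so such backtracks are trivial in $\piX$) yields $[\ell_{T_0}]_{_{\rm\Pi}}=[\ell_{T_1}]_{_{\rm\Pi}}\ast[\ell_{T_2}]_{_{\rm\Pi}}\ast[\ell_{T_3}]_{_{\rm\Pi}}^{-1}$ in $\piX$. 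Abelianizing gives $[\ell_{T_1}]+[\ell_{T_2}]-[\ell_{T_0}]-[\ell_{T_3}]=0$, i.e.\ $\psi(\partial_2T)=0$. I expect this to be the main obstacle, since it is precisely where the purely homotopy-theoretic identity of Lemma \ref{lems_Hurewicz}(\ref{lems_Hurewicz_2}) must be converted into an equality in $\piX/[\piX,\piX]$ through the bookkeeping of spoke insertion, and where abelianness is genuinely needed to rearrange the factors.

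Finally I verify the two inverse identities. For $\overline{\psi}\,\overline{\phi}=\mathrm{id}$, take a $\kappa$-loop $f$ of length $m$ based at $p$; then $\phi([f]_{_{\rm\Pi}})=[\sum_j f_j]$, and in $\sum_j\psi(f_j)$ the spokes telescope, because consecutive pieces share endpoints and $\gamma_p$ is constant, leaving exactly $[f]_{_{\rm\Pi}}$. For $\overline{\phi}\,\overline{\psi}=\mathrm{id}$, take a $1$-cycle $z=\sum_i n_iT_i$ with $T_i(0)=a_i$, $T_i(1)=b_i$; then $\phi(\overline{\psi}[z])$ differs from $[z]$ only by a weighted sum of the subdivided spokes $\gamma_{a_i}$ and $\overline{\gamma_{b_i}}$. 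Lemma \ref{lems_Hurewicz}(\ref{lems_Hurewicz_1}) lets me replace each reversed spoke by the negative of the forward spoke, and the cycle condition $\partial z=\sum_i n_i(b_i-a_i)=0$ then collects each spoke $\gamma_x$ with total coefficient zero, so the extra term vanishes in $dH_1(X)$. Hence $\overline{\phi}$ is an isomorphism and $\ker\phi=[\piX,\piX]$, which is the assertion of the theorem.
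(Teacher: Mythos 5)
Your proof is correct, and it reorganizes the argument in a genuinely different way from the paper, even though the two share all the essential ingredients: the spokes ($\beta_x$ in the paper, $\gamma_x$ in yours), Lemma \ref{lems_Hurewicz}, and the conjugation computation giving $[\ell_{T_0}]_{_{\rm\Pi}}=[\ell_{T_1}]_{_{\rm\Pi}}\ast[\ell_{T_2}]_{_{\rm\Pi}}\ast[\ell_{T_3}]_{_{\rm\Pi}}^{-1}$, which is literally the paper's identity $[\overline{L_{i0}}\ast L_{i1}\ast L_{i2}\ast\overline{L_{i3}}]_{_{\rm\Pi}}=[c_p]_{_{\rm\Pi}}$. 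The paper proves surjectivity directly: given a cycle $z=\sum_i T_i$ it reorders the $1$-cubes head-to-tail by a permutation $\rho$, concatenates them into a based loop $\sigma\ast\prod_i T_{\rho(i)}\ast\overline{\sigma}$, and checks that $\phi$ sends its class to $[z]$; it then identifies the kernel by applying the Substitution Principle (Lemma \ref{Lem_Substitution}) to the chain equation $\sum_j f_j=\partial(\sum_i T_i)$ so as to transport it into the abelianized fundamental group $\overline{\rm\Pi}$. You instead build a two-sided inverse $\overline{\psi}$ at the abelianized level, so surjectivity and the kernel computation fall out together. Your route buys two things: it bypasses the combinatorial reassembly step (which in the paper tacitly assumes the cycle closes up into a single loop rather than a disjoint union of several), and it absorbs the Substitution Principle into the routine verification that $\overline{\psi}$ is a well-defined homomorphism on $dH_1(X)$, i.e.\ that it kills $dB_1(X)$; the price is that you must verify both composites, whereas the paper's surjectivity argument is more explicitly constructive. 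Two small points deserve a sentence each in a final write-up: (i) the homotopy of Lemma \ref{lems_Hurewicz}(\ref{lems_Hurewicz_2}) holds the endpoints fixed, so after inserting the backtracks it really yields an identity of loop classes in $\piX$; and (ii) in checking $\overline{\phi}\,\overline{\psi}=\mathrm{id}$, the subdivision of a reversed spoke is the term-by-term reverse of the subdivision of the spoke, so Lemma \ref{lems_Hurewicz}(\ref{lems_Hurewicz_1}) applies piecewise and the cycle condition in the free Abelian group $dC_0(X)$ kills each spoke's total coefficient. Both are routine, and the argument is sound.
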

\begin{proof}
\text{\textit{Surjectivity}:}
Consider $[z]\in dH_1(X)$, with $z=\sum_{i=0}^m n_iT_i$, where $T_i:I\to X$ is a non-degenerate digital $1$-cube, for all $i$. 
Though $n_i\in\mathbb{Z}$, we can assume, without loss of generality,  that $n_i=1, \forall i$, for the following reason: If $n_i=0$, no contribution is made to $z$ by $n_iT_i$ and if $n_i<0$ then we can replace $n_iT_i$ by $-n_i\overline{T_i}$ without changing the class $[z]$, using Lemma \ref{lems_Hurewicz}(\ref{lems_Hurewicz_1}). Thus we can assume $n_i>0, \forall i$, but then 
each $n_iT_i$ can be written as $T_i+T_i+\cdots +T_i$ ($n_i$ terms). Therefore, $z=\sum_{i=0}^m T_i$.  
Since $z$ is a cycle, we have 
\vspace{-2pt}
\begin{equation}\label{eq_Ti10}
\partial z=\partial \left(\sum_{i=0}^mT_i\right)=0 \hspace{10pt}\Rightarrow\hspace{10pt} \sum_{i=0}^m(B_1T_i-A_1T_i)=0.
\end{equation}
For every $i\in [0,m]_\mathbb{Z}$, there exists $j\in [0,m]_\mathbb{Z}$ and $B_1T_i=A_1T_j$,  
so that the sum in eq. \ref{eq_Ti10} is 0, but $i\neq j$, because in case $i=j$, $T_i$ would be degenerate. 
Let $\rho$ be the permutation on elements of $[0,m]_\mathbb{Z}$, satisfying the condition that $A_1T_{\rho(i+1)}=B_1T_{\rho(i)}$ for all $i\in[0,M]_\mathbb{Z}$, where arguments of $\rho$ are read $\mod (M+1)$.  
We can take product of $\kappa$-paths $T_{\rho(i)}$ to get a $\kappa$-loop $\prod_{i=0}^mT_{\rho(i)}$ based at point $T_{\rho(0)}(0)\in X$.
Since the digital image $\Xk$ is $\kappa$-connected, we can take $\kappa$-path $\sigma$ from $p$ to $T_{\rho(0)}(0)$. 
We get:
\begin{align*}
\phi\left(\left[\sigma\ast\prod_{i=0}^mT_{\rho(i)}\ast\overline{\sigma}\right]_{\rm\Pi}\right)
					&=\left[\sum_{l=1}^M\sigma_l+\sum_{i=0}^mT_{\rho(i)}+\sum_{l=1}^M\overline\sigma_l\right]\\
					&=\left[\sum_{l=1}^M\sigma_l+\sum_{i=0}^mT_{\rho(i)}-\sum_{l=1}^M\sigma_l\right],
					\text{ using Lemma \ref{lems_Hurewicz}(\ref{lems_Hurewicz_1})}\\
					&=\left[\sum_{i=0}^mT_i\right]
					=[z].
\end{align*}
\text{\textit{Kernel of $\phi$}:}
Let $\rm\Pi '$ denote the commutator subgroup of $\piX$ and $\overline{\rm\Pi}$ denote the Abelianized fundamental group, \textit{i.e.} $\overline{\rm\Pi}$ is the quotient group $\piX$ modulo the commutator subgroup $\rm\Pi '$.    
Since $dH_1(X)$ is an Abelian group, $\rm\Pi '\subset ker\phi$.  We claim that the reverse inequality also holds. 
Consider a $\kappa$-loop $f$ of length $m$ such that $[ f]_{_{\rm\Pi}}\in ker\phi$. 
It suffices to show that $\llbracket  f \rrbracket $ is identity in $\overline{\rm\Pi}$, 
where $\llbracket  f \rrbracket\in\overline{\rm\Pi}$.
Since $\phi([ f]_{_{\rm\Pi}})=0$, the cycle $\sum_{j=1}^m f_j$ lies in the boundary group $dB_1(X)$, 
\textit{i.e.} there is $\sum_{i=1}^Nn_iT_i\in dC_2(X)$ such that $\sum_{j=1}^m f_j = \partial (\sum_{i=1}^Nn_iT_i) $, 
where $n_i\in\mathbb{Z}$ and $T_i:I^2\to X$ are digital $2$-cubes. We assume without loss of generality that $n_i=1, \forall i$.
Lets denote $A_1T_i,$ $A_2T_i,$ $B_1T_i$ and $B_2T_i$ 
as $T_{i0}, $ $T_{i1},$ $T_{i2}$ and $T_{i3}$, respectively, for $i\in [1,N]_\mathbb{Z}$. We get
\begin{equation}\label{eq_f_j}
\sum_{j=1}^mf_j= \sum_{i=1}^M(-T_{i0}+T_{i2}+T_{i1}-T_{i3})
\end{equation}
This equation has basis elements of the free Abelian group $dC_1(X)$ on both sides. We shall apply substitution principle (Lemma \ref{Lem_Substitution}), to obtain an analogous equation in $\overline{\rm\Pi}$. We need for each term in eq. \ref{eq_f_j}, an element in $\overline{\rm\Pi}$, satisfying the hypothesis of substitution principle.
%Since $\Xk$ is $\kappa$-connected, we can construct $\kappa$-paths to make such $\kappa$-loops at $p$.
For each $x\in X$, choose a $\kappa$-path from $p$ to $x$, denoted by $\beta_x$, such that for the base point $p$, $\beta_p = c_p$ is a constant $\kappa$-path at $p$.
For each $j\in [0,m]_\mathbb{Z}$,  define $\kappa$-loops,   $L'_j=\beta_{f(j-1)}\ast f_j \ast \overline{\beta_{f(j)}}$ based at $p$ corresponding to each $f_j$ (see Figure \ref{fig_paths_loop}(a)).
Similarly, define  $\kappa$-loops $L_{iq}=\beta_{T_{iq}(0)}\ast T_{iq}\ast\overline{\beta_{T_{iq}(1)}}$  based at $p$, corresponding to each $T_{iq}$ (see Figure \ref{fig_paths_loop}(b)). 
We get the following in $\piX$:
%\fontsize{7.5}{9}
\begin{align}\nonumber
&\left[\overline{L_{i0}}\ast L_{i1}\ast L_{i2}\ast \overline{L_{i3}}\right]_{_{\rm\Pi}} \\\nonumber
				&=[\beta_{T_{i0}(1)}\ast\overline{ T_{i0}}\ast\overline{\beta_{T_{i0}(0)}}\ast\beta_{T_{i1}(0)}\ast T_{i1}\ast\overline{\beta_{T_{i1}(1)}}\ast \beta_{T_{i2}(0)} \ast T_{i2}\ast\overline{\beta_{T_{i2}(1)}}\ast\beta_{T_{i3}(1)}\ast\overline{T_{i3}}\ast\overline{\beta_{T_{i3}(0)}}]_{_{\rm\Pi}} \\\nonumber
				&=\left[\beta_{T_{i0}(1)}\ast\overline{ T_{i0}}\ast T_{i1}\ast T_{i2}\ast\overline{T_{i3}}\ast\overline{\beta_{T_{i3}(0)}}~\right] _{_{\rm\Pi}}
%				\text{,\hspace{10pt} because } T_{i0}(0)=T_{i1}(0)\Rightarrow\beta_{T_{i0}(0)}=\beta_{T_{i1}(0)}\text{ etc.}\\
\\\nonumber
				&=\left[\beta_{T_{i0}(1)}\ast\overline{ T_{i0}}\ast T_{i0}\ast\overline{\beta_{T_{i3}(0)}}\right]_{_{\rm\Pi}}
				\text{,\hspace{10pt} using Lemma \ref{lems_Hurewicz}(\ref{lems_Hurewicz_2})} \\
				&=\left[\beta_{T_{i0}(1)}\ast\overline{\beta_{T_{i3}(0)}}\right] _{_{\rm\Pi}}
				=\left[c_p\right]_{_{\rm\Pi}} \label{eq_const_p}
\end{align}\normalsize
Second equality above follows because $T_{i0}(0)=T_{i1}(0)\Rightarrow\beta_{T_{i0}(0)}=\beta_{T_{i1}(0)}$, $T_{i1}(1)=T_{i2}(0)\Rightarrow\beta_{T_{i1}(1)}=\beta_{T_{i2}(0)}$, $T_{i2}(1)=T_{i3}(1)\Rightarrow\beta_{T_{i2}(1)}=\beta_{T_{i3}(1)}$ and $T_{i3}(0)=T_{i0}(1)\Rightarrow\beta_{T_{i3}(0)}=\beta_{T_{i0}(1)}$ (see Figure \ref{fig_paths_loop}(b)) and for any $\kappa$-path $\varrho$, the loop $\varrho\ast\overline{\varrho}$ is homotopic to constant loop at ${\varrho(0)}$ (see Theorem 4.13 in \cite{Boxer_99}).
\begin{figure}
\begin{multicols}{2}\centering
\includegraphics[scale=.4]{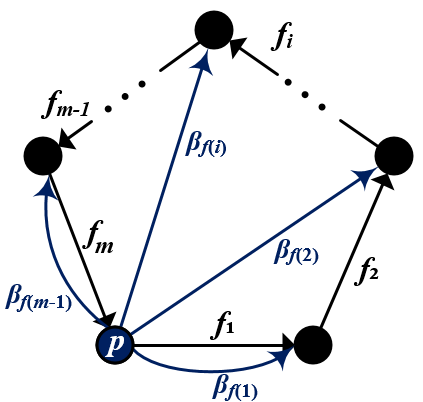}\\
\includegraphics[scale=.4]{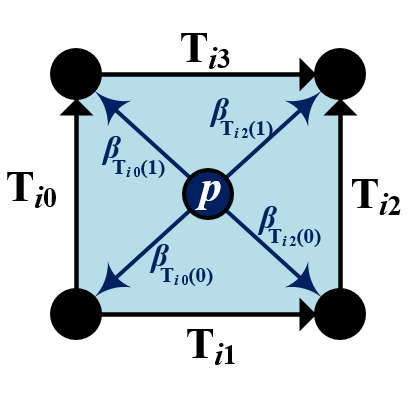}
\end{multicols}
\begin{multicols}{2}\begin{center}
(a)\\(b)
\end{center}
\end{multicols}
\caption{Schematic representation of paths $\beta_x$ (proof of Theorem \ref{thm_Hure})}
{Paths $\beta_x$ are shown in blue color (a) from $p$ to $T_{i0}(1)$, $T_{i1}(0)$, $T_{i2}(0)$ and $T_{i3}(1)$, and (b) from $p$ to $f(j),j\in [0,m-1]_\mathbb{Z}$.}\label{fig_paths_loop} 
\end{figure}

Similarly,  $\left[\prod_{j=1}^m \beta_{f(j-1)}\ast f_j \ast \overline{\beta_{f(j)}}\right]_{_{\rm\Pi}} =\left[ \prod_{j=1}^m f_j\right]_{_{\rm\Pi}}=[f]_{_{\rm\Pi}}$ in $\piX$, because $\beta_{f(0)}=\overline{\beta_{f(m)}}$ is the constant path $c_p$ at $p$.  
Therefore, we get the following in $\overline{\rm\Pi}$,
%Applying the substitution principle (Lemma \ref{Lem_Substitution}) to eq. \ref{eq_f_j} for the free Abelian group $dC_1(X)$ and the multiplicative Abelian group $\overline{\Pi}$, we get the following:
\begin{align*}
\llbracket f\rrbracket 
			&=\left\llbracket \prod_{j=1}^m f_j\right\rrbracket
			=\left\llbracket \prod_{j=1}^m \beta_{f(j-1)}\ast f_j \ast \overline{\beta_{f(j)}}\right\rrbracket \\ 
			&=\left\llbracket \prod_{i=1}^M \overline{L_{i0}}\ast L_{i1}\ast L_{i2}\ast \overline{L_{i3}}\right\rrbracket,
\end{align*}  
by applying substitution principle (Lemma \ref{Lem_Substitution}) to eq. \ref{eq_f_j} for the free Abelian group $dC_1(X)$  and the multiplicative Abelian group $\overline{\rm\Pi}$. Using eq. \ref{eq_const_p}, $\llbracket f\rrbracket$ is trivial in $\overline{\rm\Pi}$ and $[f]_{_{\rm\Pi}}\in\rm\Pi '$. Therefore, the kernel of the digital Hurewicz map is the commutator of $\piX$, and $\overline{\rm\Pi}\approx dH_1(X)$, using first isomorphism theorem of groups.
\qed
\end{proof}

\section{Relative Homology and Excision}\label{Sec_RelExc}

%This theorem if translated in terms of topological spaces and related concepts, we get the excision theorem for cubical singular homological theory of topological spaces, hence the name `Excision-like property'. This property can be regarded as digital analogue to excision property of homology theory.

For a digital image  $\Xk$  and $A\subset X$, $(A,\kappa)$ is a digital image in its own right.
Let $((X,A),\kappa)$ or briefly, $(X,A)$ denote \textit{digital image pair} with $\kappa$-adjacency. A \textit{map of pairs} $f:(X,A)\to (Y,B)$ between digital image pairs $((X,A),\kappa)$ and $((Y,B),\lambda)$ is a map $f:X \to Y$, with $f(A)\subset B$. We say that $f:(X,A)\to (Y,B)$ is $\kl$-\textit{continuous} if $f:X \to Y$ is $\kl$-continuous.
%The inclusion $i:A\to X$ induces monomorphism $i_\#:dC_n(A)\to dC_n(X)$ and so $dC_n(A)$ can be considered as a subgroup of $dC_n(X)$. Define $dC_n(X,A)=\frac{dC_n(X)}{dC_n(A)}$.
It can be verified that $\partial_n:dC_n(X)\to dC_{n-1}(X)$ maps $dC_n(A)$ to $dC_{n-1}(A)$. 
If $dC_n(X,A)$ denotes the quotient group $dC_n(X)/dC_n(A)$, then $\partial_n$ induces homomorphism $\partial_n:dC_n(X,A) \to dC_{n-1}(X,A)$ satisfying $\partial_{n-1}\circ\partial_n=0$, and making up a chain complex $(dC_\bullet(X,A),\partial)$, given as:
\begin{equation*}
\xymatrix{
\cdots \ar[r]^{\partial_{n+1}~~~~}  
& dC_n(X,A) \ar[r]^{\partial_{n}~~}
& dC_{n-1}(X,A) \ar[r]^{~~~\partial_{n-1}}
& \cdots 
}
\end{equation*}
Lets denote the homology of this chain complex as $dH_n(X,A)$, \textit{i.e.}
\[ dH_n(X,A)=\frac{ker(\partial_n:dC_n(X,A)\to dC_{n-1}(X,A))}{Im(\partial_{n+1}:dC_{n+1}(X,A)\to dC_{n}(X,A))}.
\] 
We say that $dH_n(X,A)$ is  \textit{$n^{th}$-relative cubical singular homology group of the digital image pair}  $(X,A)$. 
Clearly, $dH_n(X)=dH_n(X,\emptyset)$.

%We need to define interior and closure operators to state and prove the excision-like theorem for digital images. 
%Motivated by definitions of interior and closure operators for graph theory that appeared in \cite{Tsaur2001}, we define these operators for digital images (Definition \ref{Def_Int_Cl})   using the concept of \textit{neighborhood} of a point in a digital image. The neighborhood of a point in a digital image is the set consisting of the point and all the points that are adjacent to it.
%We use this neighborhood as open neighborhoods are used in the definitions of interior and closure of subset of a metric space. 

\begin{Def}\label{Def_Int_Cl}
Let $\Xk$ be a digital image. We define operators\newline $Int_\kappa:\mathcal{P}(X)\to\mathcal{P}(X)$ and $Cl_\kappa:\mathcal{P}(X)\to\mathcal{P}(X)$ as follows:
\[ \begin{array}{rrl}
&Int_\kappa(A)&=\{x\in A~\vert~N_\kappa(x,X)\subset A\},\\
&Cl_\kappa(A)&=\{x\in X~\vert~N_\kappa(x,X)\cap A\neq\emptyset\},\\
\text{where }& N_\kappa(x,X)&=\{y\in X~\vert~ x \text{ is } \kappa\text{-adjacent or equal to } y \}.
\end{array}\]
We say that $\ink (A)$ is $\kappa$-interior of $A$ in $\Xk$ and $\clk (A)$ is $\kappa$-closure of $A$ in $\Xk$ and the set $N_\kappa(x,X)$ is neighborhood of  $x$ in $\Xk$.
\end{Def}
Notions similar to above appear in \cite{Kong96} and \cite{Esc12} and also, the $\kappa$-interior and $\kappa$-closure operators defined above are very closely related to \textit{dilation} and \textit{erosion} operators, respectively,  used in \cite{Esc12}. The following proposition shows that these operators  satisfy many relations that are similar to those satisfied by their counterparts in topology. 
\begin{prop}\label{prop_IntCl}
Let $\Xk$ be a digital image, $A,B\subset X$ and $x,y\in X$. Then:
\begin{enumerate}[label={\rm{(\roman*)}}]
%\begin{multicols}{1}
%\item\label{clint_empty} $Cl_\kappa(\emptyset)=\emptyset$, $\ink (\emptyset)=\emptyset$\\
%\item\label{clint_unionint} $Cl_\kappa(A\cup B)=\clk (A)\cup\clk (B)$,  $\ink(A\cap B)=\ink (A)\cap\ink (B)$\\
\item\label{Clint_subset} $A\subset\clk (A)$, $\ink (A)\subset A$\\
\item\label{Int_eq_cl} $\ink (X-A)=X-\clk (A)$,  $X-\ink (A)=\clk (X-A)$\\
\item\label{clint_inclusion} $A\subset B\Rightarrow\clk (A)\subset \clk (B)$ and $\ink (A)\subset \ink (B)$\\
%\item\label{clint_invers} $\ink\circ\clk (A)=A=\clk\circ\ink (A)=A$  \\
%\item\label{cl_nhd} $N_\kappa(x,X)=\clk (\{x\})$ \\
%\item\label{Int_x} $\exists x\in X$, $x\in\ink (A)$ and $y\in N_\kappa(x,X)$$ \Leftrightarrow y\in A $\\
%\item\label{Cl_x} $\exists x\in X$, $x\in A$ and $y\in N_\kappa(x,X)$$ \Leftrightarrow y\in\clk (A) $\\
\item\label{Cl_sub_Int} $X= \ink (A)\cup\ink (B) \Leftrightarrow \clk (X-B)\subset \ink (A)$
%\item\label{} 
%\end{multicols}
\end{enumerate}\end{prop}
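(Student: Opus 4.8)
The plan is to derive all four parts from one elementary observation: for every $x\in X$ the neighborhood $N_\kappa(x,X)$ contains $x$ itself, since $x$ is counted as ``equal to'' $x$ in the defining clause. This reflexivity, combined with the set-theoretic equivalence ``$N_\kappa(x,X)\subset X-A$ iff $N_\kappa(x,X)\cap A=\emptyset$'', is the engine behind everything. I would prove the parts in the stated order (i)--(iv), so that the later items may quote the earlier ones; in particular (iv) will be a short consequence of (ii).

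For (i), the inclusion $\ink(A)\subset A$ is immediate, since $\ink(A)$ is by definition a set of points of $A$. For $A\subset\clk(A)$, given $x\in A$ reflexivity gives $x\in N_\kappa(x,X)\cap A$, so this intersection is non-empty and $x\in\clk(A)$. For (ii) I would establish $\ink(X-A)=X-\clk(A)$ first. Unwinding the definition, $x\in\ink(X-A)$ means $x\in X-A$ together with $N_\kappa(x,X)\subset X-A$; the latter is the same as $N_\kappa(x,X)\cap A=\emptyset$, and by reflexivity this already forces $x\notin A$, so the clause $x\in X-A$ is redundant. Hence $x\in\ink(X-A)$ iff $N_\kappa(x,X)\cap A=\emptyset$ iff $x\notin\clk(A)$, which is the claim. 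The dual identity $X-\ink(A)=\clk(X-A)$ then follows by substituting $X-A$ for $A$ and rearranging. For (iii), monotonicity is direct from the definitions: if $A\subset B$ and $x\in\clk(A)$, then $N_\kappa(x,X)\cap A\neq\emptyset$ forces $N_\kappa(x,X)\cap B\neq\emptyset$; and if $x\in\ink(A)$, then $x\in A\subset B$ and $N_\kappa(x,X)\subset A\subset B$, so $x\in\ink(B)$.

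Finally, for (iv) I would use the second identity of (ii) to rewrite $\clk(X-B)=X-\ink(B)$, which turns the containment $\clk(X-B)\subset\ink(A)$ into $X-\ink(B)\subset\ink(A)$. This last statement says precisely that every point of $X$ lies in $\ink(A)\cup\ink(B)$, i.e. $X=\ink(A)\cup\ink(B)$ (the reverse inclusion being automatic), giving the biconditional. I do not expect a genuine obstacle here; the only step requiring care is the bookkeeping in (ii), namely recognizing that the membership clause $x\in X-A$ in the definition of $\ink$ is automatically satisfied once $N_\kappa(x,X)\cap A=\emptyset$. This is exactly where the reflexivity of $N_\kappa(\cdot,X)$ is used, and without it the duality between $\ink$ and $\clk$ would fail.
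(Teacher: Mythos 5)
Your proof is correct and is exactly the direct verification from Definition \ref{Def_Int_Cl} that the paper has in mind (the paper omits the details, stating only that the claims ``follow easily from the definitions''). Your observation that $x\in N_\kappa(x,X)$ makes the membership clause in the definition of $\ink$ redundant under $N_\kappa(x,X)\cap A=\emptyset$ is precisely the point that makes the duality in (ii), and hence (iv), go through.
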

\begin{proof}
The proofs are simple and follow easily from Definitions \ref{Def_Int_Cl}.
\end{proof}

The $\kappa$-interior and $\kappa$-closure operators for digital images are not idempotent, \textit{i.e.} $Int_\kappa\circ Int_\kappa\neq\ink$ and $\clk\circ\clk\neq\clk$,
unlike interior and closure operators in topology, as shown in the following example. 
%This might be the reason, that 
\begin{ex}
Consider the digital image $(X,4)$ and $A\subset X$ shown in Figure \ref{fig_IntCl_Int}(a). The interiors $Int_4(A)$ and $Int^2_4(A)=Int_4( Int_4(A))$  are shown in Fig \ref{fig_IntCl_Int}(b) and (c), respectively, and the closures $Cl_4(A)$ and $Cl_4^2(A)=Cl_4(Cl_4(A))$ in $X$ in Figure  \ref{fig_IntCl_Cl} (a) and (b), respectively. 
Clearly, $Int_4\circ Int_4(A)\neq Int_4(A)$ and $Cl_4\circ Cl_4(A)\neq Cl_4(A)$.
\end{ex}

\begin{figure}[H]
\begin{multicols}{3}\centering
\includegraphics[scale=.40, height=3.5cm]{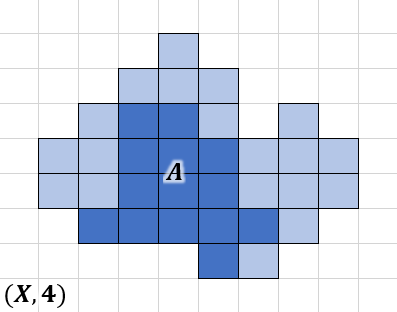}\\
\includegraphics[scale=.40, height=3.5cm]{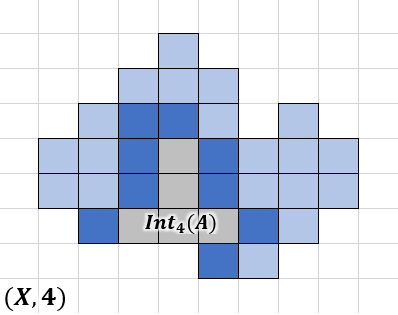}\\
\includegraphics[scale=.40, height=3.5cm]{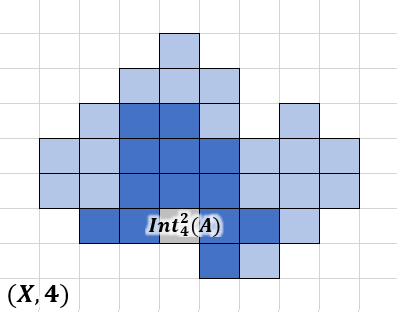}\\
\end{multicols}
\begin{multicols}{3}
\begin{center}
(a)\\(b)\\(c)
\end{center}
\end{multicols}
\caption{(a) Digital image $(X,4)$, its subset $A$ and (b) $Int_4(A)$, (c) $Int^2_4(A)$}
{Digital image $X$, $A$  and interiors are shown in blue, dark blue and grey color, respectively.} \label{fig_IntCl_Int}
\end{figure}
\begin{figure}[H]
\begin{multicols}{2}\centering
\includegraphics[scale=.30, height=3.5cm]{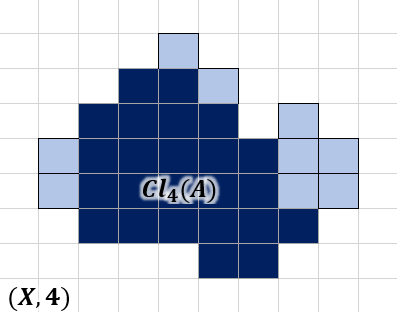}\\
\includegraphics[scale=.30, height=3.5cm]{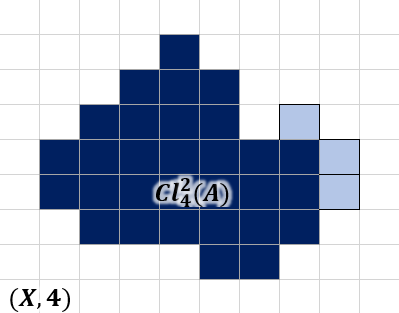}
\end{multicols}
\begin{multicols}{2}
\begin{center}
(a)\\(b)
\end{center}
\end{multicols}

\caption{(a) $Cl_4(A)$ (b) $Cl_4^2(A)$ in $(X,4)$}\label{fig_IntCl_Cl}
{Closures are shown in dark blue color, where digital image $(X,4)$ and $A\subset X$ are shown  in Figure \ref{fig_IntCl_Int}(a).}
\end{figure}
\begin{lem}\label{lem_Excision2}
Let $\Xk$ be a digital image, with subsets $A$ and $B$ such that $X=Int_\kappa(A)\cup Int_\kappa(B)$. Then  for $n\in\{ 0,1,2\}$ and for every digital $n$-cube  $T$, either $Im(T)\subset A$ or $Im(T)\subset B$.
\end{lem}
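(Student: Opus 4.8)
The plan is to analyze the structure of a digital $n$-cube $T:I^n\to X$ for small $n$ directly, exploiting the fact that the domain $I^n=[0,1]_{\ZZ}^n$ has very few points (namely $2^n$), and that the continuity of $T$ together with the hypothesis $X=\ink(A)\cup\ink(B)$ forces the whole image into one of the two sets. The key observation is that if a point $x$ lies in $\ink(A)$, then by Definition \ref{Def_Int_Cl} its entire $\kappa$-neighborhood $N_\kappa(x,X)$ lies in $A$; so whenever a vertex of the cube maps into $\ink(A)$, all of its $\kappa$-adjacent image points are automatically in $A$ as well. Since $T$ is $(2n,\kappa)$-continuous, adjacent points of $I^n$ map to equal or $\kappa$-adjacent points of $X$, so $\ink(A)$-membership of one image vertex propagates $A$-membership to its neighbors.

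First I would dispose of the cases $n=0$ and $n=1$. For $n=0$ the image is a single point $x$, which lies in $\ink(A)\subset A$ or $\ink(B)\subset B$ by hypothesis, so we are done immediately. For $n=1$, the domain $I=\{0,1\}$ has two points $T(0),T(1)$ which are equal or $\kappa$-adjacent; if say $T(0)\in\ink(A)$ then $T(1)\in N_\kappa(T(0),X)\subset A$ and also $T(0)\in A$, giving $Im(T)\subset A$ (symmetrically for $B$). The real content is $n=2$, where $I^2$ has four vertices forming a $4$-cycle under $c_{2n}=c_4$ adjacency, so every pair of the four corners is mutually adjacent or equal in the domain $(I^2,4)$. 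I would pick any vertex $v$; it lies in $\ink(A)$ or $\ink(B)$, say $\ink(A)$. Then I would argue that \emph{every} other vertex $w$ of $I^2$ is adjacent or equal to $v$ in $(I^2,4)$, hence $T(w)\in N_\kappa(T(v),X)\subset A$ by continuity and the definition of $\ink$, so all four image points lie in $A$.

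The crucial point, and the main thing to verify carefully, is that in the digital image $(I^2,4)=(I^2,2n)$ every vertex is $4$-adjacent to every other vertex: the four points $(0,0),(0,1),(1,0),(1,1)$ differ in at most two coordinates each by one unit, so indeed all six pairs are $c_2=4$-adjacent (this is exactly why the statement is restricted to $n\le 2$, since for $n\ge 3$ the cube $I^n$ has antipodal vertices that are \emph{not} $2n$-adjacent). This adjacency-completeness of $(I^2,4)$ is what lets a single vertex in $\ink(A)$ pull the entire image into $A$.

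The step I expect to be the main obstacle, or at least the one needing care, is making sure the argument is airtight about which set a vertex falls into: a priori a vertex could lie in $\ink(A)\cap\ink(B)$, but that only helps; and a vertex lying in neither interior is impossible by the covering hypothesis $X=\ink(A)\cup\ink(B)$. I would therefore phrase the conclusion as: choosing \emph{any} fixed vertex $v$ and whichever interior contains it determines the target set for all of $Im(T)$, using $A\subset\clk(A)$ and $\ink(A)\subset A$ from Proposition \ref{prop_IntCl}\ref{Clint_subset} only implicitly. No subdivision or homological machinery is needed here; it is a purely combinatorial/continuity argument about the small domain $I^n$, and the whole proof reduces to the adjacency structure of $(I^n,2n)$ for $n\in\{0,1,2\}$.
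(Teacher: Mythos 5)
Your cases $n=0$ and $n=1$ are fine and agree with the paper's proof, but your $n=2$ case rests on a false claim: it is not true that in $(I^2,4)$ every vertex is $4$-adjacent to every other vertex. The paper equips $I^n$ with the $2n$-adjacency, so $I^2$ carries $c_1=4$-adjacency in $\ZZ^2$, under which two points are adjacent only if they differ in exactly one coordinate by one unit; the diagonal pairs $(0,0),(1,1)$ and $(0,1),(1,0)$ are therefore \emph{not} $4$-adjacent (you appear to have conflated $c_2$, which equals $8$ in $\ZZ^2$, with the number $4$). Consequently, knowing $T(0,0)\in\ink(A)$ only forces $T(0,1),T(1,0)\in A$; it gives no control whatsoever over $T(1,1)$, and that diagonal vertex is precisely where the entire difficulty of the lemma sits. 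As written, your argument proves nothing about $T(1,1)$, so the $n=2$ case is not established.

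The paper closes exactly this gap with a step you are missing: if $T(1,1)\notin A$, then $T(0,1)$ and $T(1,0)$, being $\kappa$-neighbors of $T(1,1)$, lie in $\clk(X-A)$, and the covering hypothesis $X=\ink(A)\cup\ink(B)$ is equivalent (Proposition \ref{prop_IntCl}\ref{Cl_sub_Int}) to $\clk(X-A)\subset\ink(B)$; hence $T(0,1),T(1,0)\in\ink(B)$, which in turn pulls their neighbors $T(0,0)$ and $T(1,1)$ into $B$, giving $Im(T)\subset B$. Note also that your explanation of why the lemma is restricted to $n\le 2$ (antipodal vertices failing to be $2n$-adjacent only for $n\ge 3$) cannot be the right mechanism, since antipodal vertices already fail to be adjacent in $(I^2,4)$; the actual obstruction for $n\ge 3$ is that a chain of three or more adjacency steps can escape both $A$ and $B$ even when their interiors cover $X$, as the paper's example after the lemma shows, which is why the generalization in Lemma \ref{lem_Excision_i} requires iterated interiors.
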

\begin{proof}
Consider a digital  $n$-cube $T:I^n\to X$ and the following cases for $n\in\{0,1,2\}$: 
\\
\text{Case: ${n=0}$} In this case $Im(T)$ consists of single element, say $x_0$, of $X$. Thus $x_0\in\ink (A)$ or $x_0\in\ink (B)$, implying $Im(T)\subset A$ or $Im(T)\subset B$.
\\
\text{Case: ${n=1}$} In this case, the set $Im(T)\subset X$ comprises two elements, namely, $T(0)$ and $T(1)$. 
We can assume without loss of generality that the element $T(0)\in\ink (A)$. 
By definition of $\ink$ operator, $\kappa$-neighbors of $T(0)$ are in $A$, which implies $T(1)\in A$. Since $\ink (A)\subset A$ (Proposition \ref{prop_IntCl} \ref{Clint_subset}), we get $Im(T)\subset A$.
\\ 
\text{Case: ${n=2}$} In this case, the set $Im(T)\subset X$ comprises at most four distinct elements, namely, $T(0,0)$, $T(0,1)$, $T(1,0)$ and $T(1,1)$. 
We can assume without loss of generality that the element $T(0,0)\in\ink (A)$. By definition of $\ink$ operator, $\kappa$-neighbors of $T(0,0)$ are in $A$, which implies $T(0,1),T(1,0)\in A$. 
Now $T(1,1)$ may or may not lie in $A$. If $T(1,1)\in A$, then $Im(T)\subset A$.
If $T(1,1)\in X-A$, then we claim that $Im(T)\subset B$.  
Our claim follows from the following argument:  From the definition of $\clk$ operator, $T(1,1)\in X-A$ implies that $T(0,1)$ and $T(1,0)$  both lie in
$\clk (X-A)$, which is a subset of $\ink (B)$ by Proposition \ref{prop_IntCl} \ref{Cl_sub_Int}.
Therefore,  
$
 T(0,1), T(1,0)\in\ink (B)\Rightarrow T(0,0)\in B\Rightarrow Im(T)\subset B.
$\qed
\end{proof}
%\subsection{Excision-like property for digital images}
We show in the following example that the above Lemma fails for $n$-cubes with $n>2$.
\begin{ex}
Consider the digital image $(X,4)$   shown in Figure \ref{fig_Int_n2}, where in parts (a) and (b), the subsets $A$ and $B$ of $X$, respectively,  are shown in darker shades of blue. Elements of interiors $Int_4(A)$ and $Int_4(B)$ in $(X,4)$ are shown in part (c) of Figure \ref{fig_Int_n2} as grey-shaded pixels and with double-line borders, respectively. Clearly,   $X=Int_4(A)\cup Int_4(B)$. In these  figures, we have labeled some elements of $X$ as $a,b,c$ and $d$. Define a digital $3$-cube $T$ as follows: $T(0,0,0)=a$,
$T(1,0,0)=T(0,1,0)=T(0,0,1)=b$,
$T(1,1,0)=T(0,1,1)=T(1,0,1)=c$,
$T(1,1,1)=d$. It is clear that neither $Im(T)\subset A$ nor $Im(T)\subset B$. 

\begin{figure}[H]

\begin{multicols}{3}\centering
\includegraphics[scale=.40, height=3.5cm]{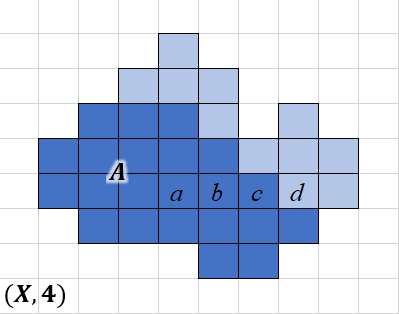}\\
\includegraphics[scale=.40, height=3.5cm]{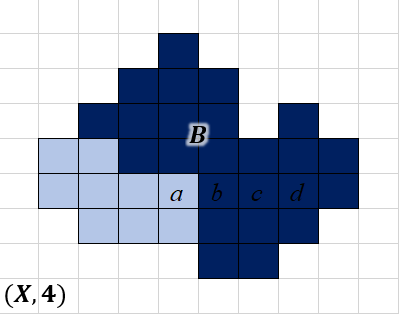}\\
\includegraphics[scale=.40, height=3.5cm]{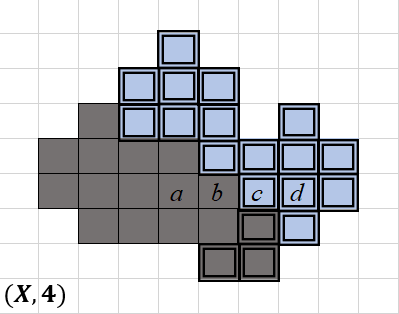}\\
\end{multicols}
\begin{multicols}{3}
\begin{center}
(a)\\(b)\\(c)
\end{center}
\end{multicols}
\caption{(a) Digital image $(X,4)$ with its subset $A$, (b) subset $B\subset X$ (c) interiors $Int_4(A)$ and $Int_4(B)$ in $(X,4)$}
{Parts (a) and (b) show subsets $A$ and $B$ of $X$  in  darker shades of blue, respectively, while part (c) shows the interior $Int_4(A)$ in grey and the interior $Int_4(B)$ with double-line borders.} \label{fig_Int_n2}
\end{figure}

\end{ex}
The following theorem is similar to Excision axiom of homology theory except that it holds only for $n$ less than 2.
\begin{thm}\label{Thm_Excision_nleq2}
Let $\Xk$ be a digital image.
\begin{itemize}[label=\small{\textbullet}]
\item For subsets $A,W\subset X$ such that $\clk (W)\subset\ink (A)$, the inclusion $(X-W,A-W)\to (X,A)$ induces isomorphisms $dH_n(X-W,A-W)\to dH_n(X,A)$, for $n< 2$.
\end{itemize}
Equivalently,
\begin{itemize}[label=\small{\textbullet}]
\item For subsets $A,B\subset X$ such that $X=\ink (A)\cup\ink (B)$, the inclusion $(B,A\cap B)\to(X,A)$ induces isomorphisms $dH_n(B,A\cap B)\to dH_n(X,A)$, for  $n< 2$.\\
\end{itemize}
\end{thm}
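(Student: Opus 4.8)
The plan is to reduce the two bullet formulations to a single statement and then exploit Lemma \ref{lem_Excision2} to upgrade the inclusion to an \emph{isomorphism of relative chain complexes in degrees $0$, $1$ and $2$}, from which the homology isomorphism for $n<2$ is immediate. For the equivalence of the two bullets I would set $B=X-W$, so that $X-W=B$ and $A-W=A\cap(X-W)=A\cap B$; the hypothesis $\clk(W)\subset\ink(A)$ reads $\clk(X-B)\subset\ink(A)$, which by Proposition~\ref{prop_IntCl}\ref{Cl_sub_Int} is exactly $X=\ink(A)\cup\ink(B)$. Hence the two assertions coincide and it suffices to prove the second one, namely that $i:(B,A\cap B)\to(X,A)$ induces isomorphisms on $dH_n$ for $n<2$.

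The heart of the argument is the chain-level claim: for $n\in\{0,1,2\}$ the induced map $i_\#:dC_n(B,A\cap B)\xrightarrow{\ \approx\ }dC_n(X,A)$ is an isomorphism. I would prove this by identifying both groups with free Abelian groups on explicit bases. Since $dC_n(X)$ is free on the non-degenerate digital $n$-cubes in $X$ and $dC_n(A)$ is the subgroup generated by those whose image lies in $A$, the quotient $dC_n(X,A)=dC_n(X)/dC_n(A)$ is free on the non-degenerate $n$-cubes $T$ in $X$ with $Im(T)\not\subset A$; similarly $dC_n(B,A\cap B)$ is free on the non-degenerate $n$-cubes $T$ in $B$ with $Im(T)\not\subset A\cap B$, i.e.\ with $Im(T)\subset B$ but $Im(T)\not\subset A$. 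Now I invoke Lemma~\ref{lem_Excision2}: for $n\le 2$ every $n$-cube $T$ in $X$ has $Im(T)\subset A$ or $Im(T)\subset B$, so any basis cube of $dC_n(X,A)$, having $Im(T)\not\subset A$, automatically satisfies $Im(T)\subset B$ and is already a basis cube of $dC_n(B,A\cap B)$. Thus $i_\#$ carries one basis bijectively onto the other and is an isomorphism for $n\in\{0,1,2\}$.

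Finally, since the relative boundary is induced by the same operator $\partial$ and $i_\#$ is a chain map, these isomorphisms assemble into a commuting ladder of chain complexes agreeing in degrees $0,1,2$. Because $dH_n$ depends only on the groups and maps in degrees $n-1,n,n+1$, the computation of $dH_0$ (needing degrees $0,1$) and of $dH_1$ (needing degrees $0,1,2$) is identical for the two pairs, yielding $dH_n(B,A\cap B)\xrightarrow{\ \approx\ }dH_n(X,A)$ for $n<2$ (the case $n<0$ being trivial). I expect the only delicate point to be the careful identification of the relative chain groups as free on the stated cube bases and the bookkeeping showing $i_\#$ matches them; everything else is formal. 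It is worth emphasizing where the restriction $n<2$ originates: computing $dH_2$ would also require the degree-$3$ groups to agree, but the conclusion of Lemma~\ref{lem_Excision2} fails for $3$-cubes (as the preceding example exhibits a $3$-cube with image in neither $A$ nor $B$), so the argument genuinely halts at $n=1$. In contrast to classical excision, no barycentric-type subdivision is required here, since Lemma~\ref{lem_Excision2} already provides ``smallness'' of cubes in low dimensions.
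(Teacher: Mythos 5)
Your proposal is correct and follows essentially the same route as the paper: the same reduction of the two bullets via Proposition~\ref{prop_IntCl}\ref{Cl_sub_Int} with $B=X-W$, and the same use of Lemma~\ref{lem_Excision2} to show the relative chain groups agree in degrees $0,1,2$, hence the homology groups agree for $n<2$. The only cosmetic difference is that you identify $dC_n(B,A\cap B)\approx dC_n(X,A)$ by matching explicit free bases of non-degenerate cubes, whereas the paper phrases the same fact as $dC_n(X)=dC_n(A)+dC_n(B)$ plus the second isomorphism theorem.
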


%\begin{cor}
%Let $\Xk$ be a digital image, with subsets $A$ and $B$ such that $X=Int_\kappa(A)\cup Int_\kappa(B)$. Then  for $n\leq 2$, $C_n(X)=C_n(A)+C_n(B)$.
%\end{cor}
\begin{proof} The equivalence of the two statements follows from Proposition \ref{prop_IntCl} \ref{Cl_sub_Int} by taking $B=X-W$, which implies $W=X-B$ and $A-W=A\cap B$.
\\
One can verify that for all $n$,  $dC_n(A)\cap dC_n(B)=dC_n(A\cap B)$ and  for $n\leq 2$, $dC_n(X)=dC_n(A)+dC_n(B)$ using Lemma \ref{lem_Excision2}. 
Furthermore, the map $\frac{dC_n(B)}{dC_n(A)\cap dC_n(B)}\to\frac{dC_n(A)+dC_n(B)}{dC_n(A)}$ induced by inclusion is an isomorphism by second isomorphism theorem of groups.
Therefore we get: 
\begin{align*}
dC_n(B,A\cap B)
		&=\frac{dC_n(B)}{dC_n(A\cap B)}
%		=\frac{dC_n(B)}{dC_n(A)\cap dC_n(B)}
%\\&
\approx\frac{dC_n(A)+dC_n(B)}{dC_n(A)}
		=\frac{dC_n(X)}{dC_n(A)}
		=dC_n(X,A),
\end{align*}
where only the second last equality is restricted to $n\leq 2$. It follows that $dH_n(X,A)\approx dH_n(B,A\cap B)$, for integers $n< 2$.
\qed
\end{proof}
Theorem \ref{Thm_Excision_nleq2} is restricted to $n<2$. 
%There is a need to extend this idea to higher dimensions. 
The first of the two versions of Theorem \ref{Thm_Excision_nleq2} states that there is no change in the $n^{th}$-relative homology groups of the digital image pair $(X,A)$, when $n<2$, if we excise out a subset $W$, which is contained `well-inside' $A$. 
In order to extend this idea to higher homology groups ($n\geq 2$), we need the subset $W$ to be contained deeper inside $A$. This can be done by iterative applications of interior and closure operators.
This gives rise to the following definitions and results similar to those in Proposition \ref{prop_IntCl}. 

\begin{Def}\label{Def_Int_Cl_i}
Let $\Xk$ be a digital image and $A\subset X$. We define the operators\newline $Int^i_\kappa:\mathcal{P}(X)\to\mathcal{P}(X)$ and $Cl^i_\kappa:\mathcal{P}(X)\to\mathcal{P}(X)$, for non-negative integers $i$, recursively, as follows: 
\begin{align*}
Int^0_\kappa(A)=A,\hspace{10pt}
&Int^i_\kappa(A)=\ink(\ink^{i-1} (A)),\text{ for positive integer } i,\\
Cl^0_\kappa(A)=A,\hspace{10pt}
&Cl^i_\kappa(A)=\clk(\clk^{i-1} (A)),\text{ for positive integer } i.
\end{align*}
\end{Def}
\begin{prop}\label{prop_IntCl_i}
Let $\Xk$ be a digital image, $A,B\subset X$ and $x,y\in X$. Then:
%\begin{multicols}{2}
\noindent
\begin{enumerate}[label={\rm{(\roman*)}}]
\item\label{Clint_subset_i} $\clk^i (A)\subset\clk^{i+1} (A)$, $\ink^{i+1} (A)\subset \ink^i (A)$\\
\item\label{Int_eq_cl_i} $\ink^i (X-A)=X-\clk^i (A)$, $X-\ink^i (A)=\clk^i (X-A)$\\
%\item\label{clint_invers_i} $\ink^i\circ\clk^i (A)=A=\clk^i\circ\ink^i (A)$\\
%\item\label{Int_x_i} $x\in\ink^i (A)$ and $y\in N_\kappa(x,X)$, $ \Rightarrow y\in \ink^{i-1} (A) $\\
%\item\label{Cl_x_i} $x\in\clk^i (A)$ and $y\in N_\kappa(x,X) $, $\Rightarrow y\in\clk^{i+1} (A) $\\
\item\label{Cl_sub_Int_i} $X= \ink^i (A)\cup\ink^i (B) $ $\Leftrightarrow \clk^i (X-B)\subset \ink^i (A)$
\end{enumerate}
%\end{multicols}
\end{prop}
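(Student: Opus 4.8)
The plan is to lift each of the three statements of Proposition \ref{prop_IntCl} to its iterated counterpart, using induction on $i$ only where it is genuinely needed and feeding the inductive step with the single-application results already established in Proposition \ref{prop_IntCl}. Concretely, part \ref{Int_eq_cl_i} carries essentially all the content, and I would derive parts \ref{Clint_subset_i} and \ref{Cl_sub_Int_i} from it (and from the base cases) with little extra work.

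For part \ref{Clint_subset_i} no induction is in fact required. Since $\clk^{i+1}(A)=\clk(\clk^i(A))$ by Definition \ref{Def_Int_Cl_i}, I would apply the inclusion $C\subset\clk(C)$ of Proposition \ref{prop_IntCl}\ref{Clint_subset} to the particular set $C=\clk^i(A)$, obtaining $\clk^i(A)\subset\clk^{i+1}(A)$ at once; dually, applying $\ink(C)\subset C$ to $C=\ink^i(A)$ gives $\ink^{i+1}(A)\subset\ink^i(A)$.

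Part \ref{Int_eq_cl_i} is the heart of the argument, and I would establish the first identity $\ink^i(X-A)=X-\clk^i(A)$ by induction on $i$. The base case $i=0$ is just $\ink^0(X-A)=X-A=X-\clk^0(A)$ from Definition \ref{Def_Int_Cl_i}. For the inductive step I would unfold $\ink^{i+1}(X-A)=\ink(\ink^i(X-A))$, substitute the inductive hypothesis to rewrite this as $\ink(X-\clk^i(A))$, and then invoke Proposition \ref{prop_IntCl}\ref{Int_eq_cl} with the set $\clk^i(A)$ playing the role of $A$, which yields $X-\clk(\clk^i(A))=X-\clk^{i+1}(A)$ and closes the induction. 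The second identity $X-\ink^i(A)=\clk^i(X-A)$ then follows instantly by replacing $A$ with $X-A$ in the first identity and rearranging complements.

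Finally, part \ref{Cl_sub_Int_i} I would deduce purely set-theoretically from part \ref{Int_eq_cl_i}, exactly mirroring how \ref{Cl_sub_Int} is related to \ref{Int_eq_cl} in Proposition \ref{prop_IntCl}. Complementing the equation $X=\ink^i(A)\cup\ink^i(B)$ shows it is equivalent to $(X-\ink^i(A))\cap(X-\ink^i(B))=\emptyset$, which by part \ref{Int_eq_cl_i} is precisely $\clk^i(X-A)\cap\clk^i(X-B)=\emptyset$. On the other side, using $\ink^i(A)=X-\clk^i(X-A)$ from part \ref{Int_eq_cl_i}, the containment $\clk^i(X-B)\subset\ink^i(A)$ is equivalent to $\clk^i(X-B)\subset X-\clk^i(X-A)$, that is, to the same disjointness condition; chaining the two equivalences yields the stated biconditional. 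The calculations are all elementary, and the only point demanding care is the bookkeeping in the inductive step of part \ref{Int_eq_cl_i}: one must apply the duality of Proposition \ref{prop_IntCl}\ref{Int_eq_cl} to the iterated set $\clk^i(A)$ rather than to $A$ itself, so that the recursion of Definition \ref{Def_Int_Cl_i} closes correctly.
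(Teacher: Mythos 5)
Your proof is correct and follows exactly the route the paper intends: the paper's own ``proof'' is just the remark that the claims follow from Definitions \ref{Def_Int_Cl} and \ref{Def_Int_Cl_i} together with Proposition \ref{prop_IntCl}, and your argument (single-step inclusions applied to the iterated sets for \ref{Clint_subset_i}, induction on $i$ via Proposition \ref{prop_IntCl}\ref{Int_eq_cl} for \ref{Int_eq_cl_i}, and a purely set-theoretic deduction of \ref{Cl_sub_Int_i} from \ref{Int_eq_cl_i}) is precisely the omitted verification.
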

\begin{proof}
The proofs are simple and follow easily from Definitions \ref{Def_Int_Cl} and \ref{Def_Int_Cl_i}, and Proposition \ref{prop_IntCl}.
\qed
\end{proof}
We give a generalization of Lemma \ref{lem_Excision2}, using Definitions \ref{Def_Int_Cl_i} and Proposition \ref{prop_IntCl_i}.
\begin{lem}\label{lem_Excision_i}
Let $\Xk$ be a digital image, with subsets $A$ and $B$ such that there is a positive integer $i$ with $X=Int^i_\kappa(A)\cup Int^i_\kappa(B)$. Then for $ n\leq i+1$ and for every digital $n$-cube $T$, $Im(T)\subset A$ or $Im(T)\subset B$.
\end{lem}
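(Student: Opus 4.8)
The plan is to exploit the hypercube structure of the domain $I^n=\{0,1\}^n$ together with two monotone ``propagation'' principles, one driven by the interior operator and one by the closure operator. First I would record the combinatorial setup: since $I^n\subset\mathbb{Z}^n$, the adjacency $2n$ is exactly the $c_1$-adjacency of $\mathbb{Z}^n$, so two vertices of $I^n$ are adjacent precisely when they differ in a single coordinate. Hence the vertex set is an $n$-cube graph in which the graph distance $d(\cdot,\cdot)$ between vertices equals their Hamming distance, and for any chosen vertex $v_0$ there is a unique antipodal vertex $w_0$ at maximal distance $n$. Because $T$ is $(2n,\kappa)$-continuous, for adjacent vertices $u,u'$ the images $T(u),T(u')$ are equal or $\kappa$-adjacent in $X$.

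The two principles I would establish by a one-step computation and then iterate along a geodesic are: \emph{(interior)} if $T(u)\in\ink^{j}(A)$ with $j\ge 1$ and $u'$ is adjacent to $u$, then $T(u')\in\ink^{j-1}(A)$ --- immediate from $\ink^{j}(A)=\ink(\ink^{j-1}(A))$ when the images are $\kappa$-adjacent, and from the nesting $\ink^{j}(A)\subset\ink^{j-1}(A)$ of Proposition~\ref{prop_IntCl_i} \ref{Clint_subset_i} when they are equal; and \emph{(closure)} if $T(u)\in\clk^{j}(X-A)$ and $u'$ is adjacent to $u$, then $T(u')\in\clk^{j+1}(X-A)$ --- from $T(u)\in N_\kappa(T(u'),X)\cap\clk^{j}(X-A)$ when $\kappa$-adjacent, and from $\clk^{j}(X-A)\subset\clk^{j+1}(X-A)$ otherwise. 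Iterating along a shortest path then yields, for any vertex $v$, that $T(v_0)\in\ink^{i}(A)$ forces $T(v)\in\ink^{\,i-d(v_0,v)}(A)$ whenever $d(v_0,v)\le i$, and that $T(w_0)\in X-A$ forces $T(v)\in\clk^{\,d(w_0,v)}(X-A)$.

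With these in hand the argument is short. By hypothesis $X=\ink^{i}(A)\cup\ink^{i}(B)$, so the chosen vertex $v_0$ has $T(v_0)\in\ink^{i}(A)$, say (the case $\ink^{i}(B)$ is symmetric). Every vertex $v$ with $d(v_0,v)\le i$ then satisfies $T(v)\in\ink^{\,i-d(v_0,v)}(A)\subset A$. If $n\le i$ this covers all vertices and $Im(T)\subset A$. If $n=i+1$, the only uncontrolled vertex is the antipode $w_0$, and I would split on $T(w_0)$: if $T(w_0)\in A$ then again $Im(T)\subset A$; if $T(w_0)\in X-A$, then every vertex $v\neq v_0$ lies at distance $d(w_0,v)\le i$ from $w_0$, whence $T(v)\in\clk^{\,d(w_0,v)}(X-A)\subset\clk^{i}(X-A)\subset\ink^{i}(B)\subset B$, the inclusion $\clk^{i}(X-A)\subset\ink^{i}(B)$ coming from Proposition~\ref{prop_IntCl_i} \ref{Cl_sub_Int_i} applied with $A$ and $B$ interchanged (legitimate since the hypothesis is symmetric in $A,B$). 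Finally, each neighbor $u$ of $v_0$ lies at distance $i$ from $w_0$, so $T(u)\in\clk^{i}(X-A)\subset\ink^{i}(B)\subset\ink(B)$ (using $i\ge1$); since $T(v_0)$ is equal or $\kappa$-adjacent to such a $T(u)$, the defining property of $\ink(B)$ forces $T(v_0)\in B$, and therefore $Im(T)\subset B$.

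I expect the main obstacle to be precisely the boundary case $n=i+1$, which is the exact generalization of the $n=2$ case of Lemma~\ref{lem_Excision2}: there the interior propagation runs out one step before reaching the antipode, so one must switch to closure propagation seeded at $w_0$ and invoke the symmetrized interior/closure duality $\clk^{i}(X-A)\subset\ink^{i}(B)$ to sweep everything into $B$. Keeping the two iteration counts consistent --- distance $d$ from $v_0$ costs $d$ interior levels, distance $d$ from $w_0$ buys $d$ closure levels --- and using the nesting inequalities of Proposition~\ref{prop_IntCl_i} to absorb the ``equal'' alternative at each propagation step are the only places demanding care.
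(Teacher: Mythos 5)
Your proof is correct and follows essentially the same route as the paper's: the level sets $S_j$ in the paper are exactly your strata of vertices at Hamming distance $j$ from $v_0$, the interior-propagation step is the paper's $S_j\subset\ink^{i-j}(A)$, and in the boundary case $n=i+1$ both arguments switch to closure propagation from the antipode and invoke $\clk^i(X-A)\subset\ink^i(B)$. The only (harmless) difference is how the base vertex is recovered in that last case: the paper carries the interior exponent of $B$ back down to $S_0$, while you use a one-step neighborhood argument via $\ink(B)$.
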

\begin{proof}
Consider a digital  $n$-cube $T:I^n\to X$, $n\in\{0,1,\ldots,i+1\}$. The set $Im(T)\subset X$ can be partitioned into sets $S_j$ for $j=0,1,\ldots,n$ defined as follows: 
\begin{equation*}
S_j=\{T(x_1,x_2,\ldots,x_n)~\vert~{\rm\Sigma}_{i=1}^n x_i=j\}
\end{equation*}
Note that for  $j\in\{1,2,\ldots,n-1\}$, elements of $S_j$ are $\kappa$-neighbors of elements of $S_{j+1}$ and $S_{j-1}$ and that $S_0$ and $S_n$ are singletons.\\
\text{Case: ${n=0}$} In this case, the partition of $Im(T)$ consists of single set $S_0\subset X$. 
Thus $S_0\subset\ink^i (A)$ or $S_0\subset\ink^i (B)$, implying $Im(T)\subset A$ or $Im(T)\subset B$. 
\\ 
\text{Case: ${0< n< i+1}$} 
 We can assume without loss of generality that the singleton $S_0\subset\ink^i (A)$. 
Then by definition of $\ink^i$ operator, $S_j\subset\ink^{i-j}(A)$, for $j=1,2,\ldots,n$. 
 Thus for all $j$, $S_j\subset A$, since $\ink^i(A)\subset A$ from Proposition \ref{prop_IntCl_i} \ref{Clint_subset_i}. 
Therefore, $Im(T)\subset A$.
\\ 
\text{Case: ${n=i+1}$} Again, we can assume without loss of generality that the set $S_0\subset\ink^i (A)$. From the definition of $\ink^i$, for $j=1,2,\ldots,n-1$,  $S_j\subset\ink^{i-j} (A)$. 
Now $Im(T)-S_n\subset A$ and $S_n$ may or may not lie in $A$. If $S_n\subset A$, then $Im(T)\subset A$, which completes the proof.\\
However, if $S_n\subset X-A$, then we claim that $Im(T)\subset B$, which also completes the proof.  
Our claim follows from the following argument:  
From the definition of $\clk$ operator, $S_n\subset X-A$ implies $S_{n-1}$ is contained in $\clk (X-A)$.
Using Proposition \ref{prop_IntCl_i}, we get the following: 
\[X-A\subset\clk (X-A)\subset\clk^i (X-A)\subset\ink^i (B), \] 
\[\Rightarrow S_{n-1}\subset\ink^i (B)~~
\Rightarrow S_{n-j}\subset\ink^{i-j+1}(B), \text{ for  } j=2,3,\ldots, n~~\Rightarrow Im(T)\subset B.\]
\qed
\end{proof}

\begin{thm}\label{Thm_Excision_like}
\text{\emph{[Excision-like property]}}\\
Let $\Xk$ be a digital image.
\begin{itemize}[label=\small{$\bullet$}]
\item For subsets $A,W\subset X$ such that there is a positive integer $i$, with $\clk^i (W)\subset\ink^i (A)$, the inclusion $(X-W,A-W)\to (X,A)$ induces isomorphisms $dH_n(X-W,A-W)\to dH_n(X,A)$, for integers $ n< i+1$.
\end{itemize}
Equivalently,
\begin{itemize}[label=\small{$\bullet$}]
\item For subsets $A,B\subset X$ such that there is a positive integer $i$, with  $X=\ink^i (A)\cup\ink^i (B)$, the inclusion $(B,A\cap B)\to(X,A)$ induces isomorphisms $dH_n(B,A\cap B)\to dH_n(X,A)$, for integers $ n< i+1$.
\end{itemize}
\end{thm}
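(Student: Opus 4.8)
The plan is to run the argument of Theorem \ref{Thm_Excision_nleq2} with the single operators $\ink,\clk$ replaced throughout by their $i$-fold iterates $\ink^i,\clk^i$ and the range $n\le 2$ replaced by $n\le i+1$. The one new ingredient that makes the higher-dimensional version go through, namely that every digital $n$-cube with $n\le i+1$ has image lying entirely inside $A$ or entirely inside $B$, has already been isolated as Lemma \ref{lem_Excision_i}. First I would dispose of the equivalence of the two bulleted statements: setting $B=X-W$, so that $W=X-B$ and $A-W=A\cap B$, Proposition \ref{prop_IntCl_i} \ref{Cl_sub_Int_i} turns the hypothesis $\clk^i(W)\subset\ink^i(A)$ into $X=\ink^i(A)\cup\ink^i(B)$ and the inclusion of pairs $(X-W,A-W)\to(X,A)$ into $(B,A\cap B)\to(X,A)$. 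It therefore suffices to prove the second statement.

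Next I would record the two chain-level facts that drive the computation. For every $n$ one has $dC_n(A)\cap dC_n(B)=dC_n(A\cap B)$, because each of these groups is free on the non-degenerate $n$-cubes whose image lies in the relevant set, and a cube has image in both $A$ and $B$ exactly when its image lies in $A\cap B$. For $n\le i+1$ one has in addition $dC_n(X)=dC_n(A)+dC_n(B)$, since by Lemma \ref{lem_Excision_i} each generating non-degenerate $n$-cube of $dC_n(X)$ already lies in $dC_n(A)$ or in $dC_n(B)$. Consequently the inclusion-induced homomorphism
\[
dC_n(B,A\cap B)=\frac{dC_n(B)}{dC_n(A\cap B)}\longrightarrow\frac{dC_n(X)}{dC_n(A)}=dC_n(X,A)
\]
is well-defined since $dC_n(A\cap B)\subset dC_n(A)$, is injective for every $n$ by the first identity, and is surjective for $n\le i+1$ by the second; this is precisely the second isomorphism theorem of groups applied to the subgroups $dC_n(A),dC_n(B)\subset dC_n(X)$. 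Hence it is a group isomorphism in every degree $n\le i+1$.

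Because these maps are induced by one and the same inclusion of pairs, they commute with the boundary operators and assemble into a chain map. To pass to homology I would observe that $dH_n(X,A)$ depends only on the chain groups in degrees $n-1,n,n+1$; for $n<i+1$, equivalently $n\le i$, all three of these indices are at most $i+1$, so the chain map is an isomorphism in each of them. A short diagram chase (or the five lemma applied to the maps of kernels and images of the boundary operators) then yields that $dH_n(B,A\cap B)\to dH_n(X,A)$ is an isomorphism for $n<i+1$, which is the assertion. I expect the only point requiring genuine care to be the bookkeeping of the index range: the chain-level isomorphism reaches degree $i+1$, but computing $dH_n$ uses degree $n+1$, so the homology isomorphism only survives up to $n=i$, which is exactly why the stated range is $n<i+1$ rather than $n\le i+1$.
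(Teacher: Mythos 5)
Your proposal is correct and follows essentially the same route as the paper: equivalence of the two bullets via Proposition \ref{prop_IntCl_i} \ref{Cl_sub_Int_i}, the chain-level identities $dC_n(A)\cap dC_n(B)=dC_n(A\cap B)$ and $dC_n(X)=dC_n(A)+dC_n(B)$ for $n\le i+1$ from Lemma \ref{lem_Excision_i}, the second isomorphism theorem, and passage to homology. Your explicit bookkeeping of why the chain isomorphism in degrees up to $i+1$ only yields a homology isomorphism for $n<i+1$ is a welcome elaboration of a step the paper leaves implicit.
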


\begin{proof}
The equivalence of the two statements follows from Proposition \ref{prop_IntCl_i} \ref{Cl_sub_Int_i} as in the proof of Theorem \ref{Thm_Excision_nleq2}. Rest of the proof is also similar to the proof of Theorem \ref{Thm_Excision_nleq2} except that the equality $(dC_n(A)+dC_n(B))/dC_n(A)=dC_n(X)/dC_n(A)$ holds  for $n\leq i+1$ from Lemma \ref{lem_Excision_i}.\qed
\end{proof}
The following result states the condition under which Excision-like property for $n^{th}$-digital cubical-singular homology holds for all $n$.
\begin{cor}\label{cor_exicision}
Let $\Xk$ be a digital image.
\begin{itemize}[label=\small{$\bullet$}]
\item For subsets $A,W\subset X$ such that $W\subset A$, $\clk(W)=W$ and $\ink(A)=A$, the inclusion $(X-W,A-W)\to (X,A)$ induces isomorphisms $dH_n(X-W,A-W)\to dH_n(X,A)$, for all $ n$.
\end{itemize}
Equivalently,
\begin{itemize}[label=\small{$\bullet$}]
\item For subsets $A,B\subset X$ such that $X=A\cup B$, $\ink(A)=A$ and $\ink(B)=B$, the inclusion $(B,A\cap B)\to(X,A)$ induces isomorphisms $dH_n(B,A\cap B)\to dH_n(X,A)$, for all $ n$.
\end{itemize}
\end{cor}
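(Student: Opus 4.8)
The plan is to derive this from the Excision-like property (Theorem \ref{Thm_Excision_like}) by observing that the hypotheses $\clk(W)=W$ and $\ink(A)=A$ are fixed-point conditions that survive iteration of the closure and interior operators. First I would record the elementary fact that these conditions propagate to all iterates: since $\clk^i(W)=\clk(\clk^{i-1}(W))$ by Definition \ref{Def_Int_Cl_i}, a trivial induction starting from $\clk^1(W)=\clk(W)=W$ shows $\clk^i(W)=W$ for every positive integer $i$; the identical argument gives $\ink^i(A)=A$ for every positive integer $i$.

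Combining this with the containment $W\subset A$ yields, for \emph{every} positive integer $i$, the chain
\[
\clk^i(W)=W\subset A=\ink^i(A),
\]
which is precisely the hypothesis $\clk^i(W)\subset\ink^i(A)$ of Theorem \ref{Thm_Excision_like}. Applying that theorem with this $i$, the inclusion $(X-W,A-W)\to(X,A)$ induces isomorphisms $dH_n(X-W,A-W)\to dH_n(X,A)$ for all integers $n<i+1$. Since the fixed-point hypotheses validate the theorem for arbitrarily large $i$, the isomorphism holds in every dimension $n$, establishing the first statement.

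For the equivalent formulation I would proceed exactly as in the proof of Theorem \ref{Thm_Excision_nleq2}: put $B=X-W$, so that $W=X-B$ and $A-W=A\cap B$, and use Proposition \ref{prop_IntCl_i} \ref{Cl_sub_Int_i} to translate the condition. Under the hypotheses $\ink(A)=A$, $\ink(B)=B$ and $X=A\cup B$ of the second version, the iteration argument gives $\ink^i(A)=A$ and $\ink^i(B)=B$, whence $X=A\cup B=\ink^i(A)\cup\ink^i(B)$ for every $i$, which is the equivalent hypothesis of Theorem \ref{Thm_Excision_like}; the conclusion for all $n$ follows as above.

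I do not anticipate a genuine obstacle here, since the corollary is essentially a limiting case of Theorem \ref{Thm_Excision_like}. The only point requiring a moment of care is the justification that one may let $i$ grow without bound: one must note that the isomorphisms obtained for the various admissible $i$ are all induced by the single fixed inclusion of pairs, so for each fixed dimension $n$ it suffices to choose any $i>n$, and the map realizing the isomorphism does not depend on that choice. Everything else reduces to the trivial iteration observation for the fixed-point operators.
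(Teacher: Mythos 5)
Your proposal is correct and follows essentially the same route as the paper: iterate the fixed-point hypotheses to get $\clk^i(W)=W\subset A=\ink^i(A)$ for every $i$, then invoke Theorem \ref{Thm_Excision_like} with $i$ arbitrarily large, handling the equivalence of the two formulations as in Theorem \ref{Thm_Excision_nleq2}. Your added remark that the isomorphisms for different $i$ are all induced by the one fixed inclusion is a sensible (if implicit in the paper) point of care.
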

\begin{proof}
The equivalence of the statements can be shown in a similar way as in the proof of Theorem \ref{Thm_Excision_nleq2}.
Using the hypothesis of first statement, one can show that for all integers $i$, $\clk^i(W)=W$ and $\ink^i(A)=A$, therefore  $\clk^i(W)\subset\ink^i(A)$ also holds for all integers $i$. Rest follows from Theorem \ref{Thm_Excision_like}.
\end{proof}

\section{Digital Homology Theory}\label{Sec_DigHom}
We define \textit{category  of digital-image pairs} $\cat{Dig^2}$ with digital-image pairs as objects and $\kl$-continuous maps of pairs as morphisms. It can be shown that $dH_n(-,-)$ is a functor from $\cat{Dig^2}$ to $\cat{Ab}$ in a similar way as in Proposition \ref{prop_funcHn}. 
\begin{Def}
We say that $\kl$-continuous maps of pairs $f,g:(X,A)\to (Y,B)$ are \textit{$\kl$-homotopic as maps of pairs}, if $H:[0,m]_\mathbb{Z}\times X\to Y$ is $\kl$-homotopy from  $f:X\to Y$ to $g:X\to Y$ and $H(t, A)\subset B,$ $\forall t\in [0,m]_\mathbb{Z}$. 
\end{Def}
\begin{Def}\label{Def_HomTh}
Digital homology theory consists of functors ${dH}_n(-,-)$ from the category of digital image pairs $\cat{Dig^2}$ to the category of Abelian groups $\mathsf{Ab}$ along with natural transformations $\partial_\ast:dH_n(X,A)\to dH_{n-1}(A)$, (where $dH_{n-1}(A,\emptyset)$ is denoted as $dH_{n-1}(A)$) satisfying following axioms:

\begin{description}
\item [\rm{[\textit{Homotopy axiom}]}]~
\newline If $f,g:(X,A)\to (Y,B)$ are homotopically equivalent, then $f_\ast,g_\ast:dH_n(X,A)\to dH_n(Y,B)$ are equal maps.
\item [\rm{[\textit{Exactness axiom}]}]~
\newline For each digital image pair $(X,A)$, and inclusion maps $i:A\hookrightarrow X$ and $j:(X,\emptyset)\hookrightarrow (X,A)$, there is a long-exact sequence:
\begin{equation*}
\xymatrix{
\cdots \ar[r]^{\partial_\ast~~~~}  
& dH_n(A) \ar[r]^{i_\ast}
& dH_n(X) \ar[r]^{j_\ast}
& dH_n(X,A) \ar[r]^{\partial_\ast}
& dH_{n-1}(A) \ar[r]^{~~~~~i_\ast}
& \cdots
}
\end{equation*}
\item [\rm{[\textit{Excision axiom}]}]~\newline 
For  a digital image pair $(X,A)$ and a subset $W\subset A$ such that there is a positive integer $i$ with $\clk^i (W)\subset \ink^i (A)$, the inclusion $(X-W,A-W)\to (X,A)$ induces isomorphism $dH_n(X-W,A-W)\to dH_n(X,A)$ for $0\leq n\leq i+1$. 
\item [\rm{[\textit{Dimension axiom}]}]~\newline 
If $X=\{x_0\}$ is a one-point digital image,
$  dH_n(X) =   0$, for all $n>0$.

\item [\rm{[\textit{Additivity axiom}]}]~\newline 
If $\{(X_\alpha,\kappa)~\vert~\alpha\in\Lambda\}$ is a collection of mutually $\kappa$-disconnected digital images with $X_\alpha\subset\mathbb{Z}^d$ and $\Xk$ is the digital image $X=\bigcup_\alpha X_\alpha$, then $dH_n(X)\approx\bigoplus_\alpha dH_n(X_\alpha)$. 
\end{description}
\end{Def}

\begin{thm}
The relative cubical singular homology groups $dH_n(-,-)$ form a digital homology theory.
\end{thm}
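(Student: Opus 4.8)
The plan is to verify, one by one, the five axioms listed in Definition \ref{Def_HomTh}, relying on the machinery already developed. Three of the axioms are essentially restatements of results proved earlier, so the only axiom demanding genuinely new work is the \emph{Exactness axiom}, together with the construction of the connecting natural transformation $\partial_\ast$. First I would record that $dH_n(-,-):\cat{Dig^2}\to\cat{Ab}$ is a functor, proved exactly as in Proposition \ref{prop_funcHn}: a map of pairs $f:(X,A)\to(Y,B)$ has $f_\#$ carrying $dC_\bullet(A)$ into $dC_\bullet(B)$, hence descending to the relative chain complexes. For the \emph{Homotopy axiom} I would reuse the chain homotopy $\Phi$ built in Theorem \ref{prop_homotopy}. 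If $H:[0,m]_\ZZ\times X\to Y$ is a homotopy of maps of pairs from $f$ to $g$, so that $H(t,A)\subset B$ for all $t$, then by its very definition $\Phi_n(T)=\sum_{j=1}^m F_j(id\times T)$ sends a cube $T$ with image in $A$ to cubes with image in $H([0,m]_\ZZ\times A)\subset B$. Thus $\Phi$ carries $dC_n(A)$ into $dC_{n+1}(B)$ and descends to a chain homotopy on relative chains; the identity $g_\#-f_\#=\partial\Phi+\Phi\partial$ then holds at the relative level, giving $f_\ast=g_\ast$ on $dH_n(X,A)$.

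The heart of the proof is the \emph{Exactness axiom}. By definition $dC_n(X,A)=dC_n(X)/dC_n(A)$, so for each pair $(X,A)$ there is a short exact sequence of chain complexes
\begin{equation*}
0\longrightarrow dC_\bullet(A)\xrightarrow{\ i_\#\ } dC_\bullet(X)\xrightarrow{\ j_\#\ } dC_\bullet(X,A)\longrightarrow 0,
\end{equation*}
where $i:A\hookrightarrow X$ and $j:(X,\emptyset)\hookrightarrow(X,A)$ are the inclusions. Applying the zig-zag (snake) lemma of homological algebra produces the connecting homomorphism $\partial_\ast:dH_n(X,A)\to dH_{n-1}(A)$ and the long exact sequence displayed in the axiom. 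Its naturality follows because a map of pairs induces a morphism of the above short exact sequences, that is, a commuting ladder of chain complexes, and the connecting homomorphism is natural with respect to such morphisms.

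The remaining three axioms are then immediate. The \emph{Excision axiom} is precisely the first assertion of the Excision-like property (Theorem \ref{Thm_Excision_like}), under the hypothesis $\clk^i (W)\subset\ink^i (A)$. The \emph{Dimension axiom} is contained in Proposition \ref{prop_Dim}, which gives $dH_n(\{x_0\})=0$ for $n>0$. For the \emph{Additivity axiom}, note that if the $X_\alpha$ are mutually $\kappa$-disconnected then each $\kappa$-component of $X=\bigcup_\alpha X_\alpha$ lies entirely in a single $X_\alpha$; grouping the components inside each $X_\alpha$ and using associativity of direct sums, Proposition \ref{Prop_DirectSum} yields $dH_n(X)\approx\bigoplus_\alpha dH_n(X_\alpha)$.

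I expect the main obstacle to be the \emph{Exactness axiom}: although the zig-zag lemma itself is standard, one must verify carefully that the short exact sequence of chain complexes is natural in the pair $(X,A)$, so that $\partial_\ast$ is genuinely a natural transformation, and check the compatibility of $\partial_\ast$ with the induced maps $i_\ast$ and $j_\ast$ across all degrees. Everything else reduces to citing the homotopy invariance, dimension, direct-sum, and excision-like results established earlier.
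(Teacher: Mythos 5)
Your proposal is correct and follows essentially the same route as the paper: verify each axiom in turn, citing the homotopy invariance, excision-like, dimension, and direct-sum results already established, and derive exactness from the short exact sequence of chain complexes via the zig-zag lemma. The extra detail you supply (restricting the chain homotopy $\Phi$ to relative chains, and grouping $\kappa$-components within each $X_\alpha$ for additivity) only fills in steps the paper leaves implicit.
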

\begin{proof} We prove the axioms of digital homology theory one-by-one:
\begin{description}
\item[\rm{[\textit{Homotopy axiom}]}] 
It can be shown,  using Theorem \ref{prop_homotopy} that if $f,g:(X,A)\to (Y,B)$ are homotopically equivalent, then $f$ and $g$ induce the same map $f_\ast=g_\ast$ from $dH_n(X,A)$ to $dH_n(Y,B)$.
\item[\rm{[\textit{Exactness axiom}]}] 
For a digital image pair $(X,A)$, we have chain complexes $(dC_\bullet(A),\partial)$, $(dC_\bullet(X),\partial)$ and $(dC_\bullet(X,A),\partial)$. We also have chain maps $i_\ast: dC_n(A)\to dC_n(X)$ and $j_\ast:dC_n(X)\to dC_n(X,A)$, 
induced by inclusions $i:A\hookrightarrow X$ and $j:(X,\emptyset)\hookrightarrow (X,A)$. This gives the following short exact sequence of chain-complexes.
\begin{equation*}
\xymatrix{
0 \ar[r]  
& dC_\bullet(A) \ar[r]^{i_\ast}
& dC_\bullet(X) \ar[r]^{j_\ast}
& dC_\bullet(X,A) \ar[r]
& 0
}
\end{equation*}
The above short-exact sequence induces the following long-exact sequence of homology groups: 
\begin{equation*}
\xymatrix{
\cdots \ar[r]^{\partial_\ast~~~~}  
& dH_n(A) \ar[r]^{i_\ast}
& dH_n(X) \ar[r]^{j_\ast}
& dH_n(X,A) \ar[r]^{\partial_\ast}
& dH_{n-1}(A) \ar[r]^{~~~~~i_\ast}
& \cdots
}
\end{equation*}
by zig-zag lemma (\cite{AT_Munkres}, Lemma 24.1). The zig-zag lemma also asserts the existence and uniqueness of the homomorphism $\partial_\ast:dC_n(X,A)\to dC_{n-1}(A)$.
\item[\rm{[\textit{Excision axiom}]}] See Theorem \ref{Thm_Excision_like}.
\item[\rm{[\textit{Dimension axiom}]}] Can be easily proved using Proposition \ref{prop_Dim}.  
\item[\rm{[\textit{Additivity axiom}]}]See Proposition \ref{Prop_DirectSum}. \qed
\end{description}
\end{proof} 
\section{Conclusion}
We have developed cubical singular homology  for digital images as functors from the category of digital images $\cat{Dig}$ to the category of Abelian groups. We showed that the Abelianized fundamental group of digital images developed by Boxer \cite{Boxer_99} is isomorphic to our first homology group. Furthermore, we showed that the sequence of functors satisfy axioms that can be regarded as digital analogue to Eilenberg-Steenrod axioms. 
We also defined digital version of  homology theory.  \\

Singular homology for topological spaces is in general difficult to compute  and the same is true for our case of cubical singular homology for digital  images. More theoretical study is required to make computations possible to some extent. This work can be extended in various directions. Based on our work, cohomology theory for digital images can be developed. Our work is restricted to black-and-white digital images, one might extend this work to develop homology theory for grey-scale and colored digital images.
%Currently, we do not know to what extent this work is applicable to the unbounded digital images.

%Most of the researchers in digital topology model digital images as Cubical complexes \cite{Robins_2011,Gonzalez_2011}.
%Some advantages of cubical Complexes over simplicial complexes, especially in terms of computations are discussed in \cite{Kaczynski_2004}.
%Other attempts for homology. 
%Review:
%Robins \citep{Robins_2011}

% For tables use
%\begin{table}
% table caption is above the table
%\caption{Please write your table caption here}
%\label{tab:1}       % Give a unique label
%% For LaTeX tables use
%\begin{tabular}{lll}
%\hline\noalign{\smallskip}
%first & second & third  \\
%\noalign{\smallskip}\hline\noalign{\smallskip}
%number & number & number \\
%number & number & number \\
%\noalign{\smallskip}\hline
%\end{tabular}
%\end{table}

%\begin{acknowledgements}
%If you'd like to thank anyone, place your comments here
%and remove the percent signs.
%\end{acknowledgements}

% BibTeX users please use one of
%\bibliographystyle{spbasic}      % basic style, author-year citations
%\bibliographystyle{spmpsci}      % mathematics and physical sciences
%\bibliographystyle{spphys}       % APS-like style for physics
%\bibliography{}   % name your BibTeX data base

% Non-BibTeX users please use

% Format for Journal Reference
%Author, Article title, Journal, Volume, page numbers (year)
% Format for books
%\bibitem{RefB}
%Author, Book title, page numbers. Publisher, place (year)
% etc

\end{document}